\newcommand{\pa}{\partial}
\newcommand{\bp}{\textbf{p}}
\newcommand{\bx}{{\bf x}}
\newcommand{\bz}{{\bf z}}
\newcommand{\n}{\nabla}
\newcommand{\N}{\mathbb{N}}
\newcommand{\M}{\mathfrak{M}}
\newcommand{\R}{\mathbb{R}}
\newcommand{\Z}{\mathbb{Z}}
\newcommand{\cC}{\mathcal{C}}
\newcommand{\cL}{\mathcal{L}}
\renewcommand{\epsilon}{\varepsilon}
\newcommand{\be}{\textbf{e}}
\renewcommand{\qed}{\hspace*{\fill} \setlength{\unitlength}{1mm}
\begin{picture}(2.5,2.5)
      \put(0,0){\framebox(2.5,2.5){}}
\end{picture}
\setlength{\unitlength}{1pt}}
        \definecolor{pink}{rgb}{1,0,1}
\begin{document} 

\title{The fundamental gap of simplices}

\author[Zhiqin Lu]{Zhiqin Lu}
\address{Department of Mathematics, University of California, Irvine, CA 92697-3875.} \email{zlu@uci.edu}

\author[Julie Rowlett]{Julie Rowlett}
\address{Max Planck Institut f\"ur Mathematik,  Vivatgasse 7,  D-53111 Bonn, Germany.}  \email{rowlett@math.uni-bonn.de}

\newtheorem{theorem}{Theorem}
\newtheorem{proposition}{Proposition}
\newtheorem{lemma}{Lemma}
\newtheorem{cor}{Corollary}
\newtheorem{question}{Question}
\newtheorem{claim}{Claim}
\newtheorem{note}{Note}
\newtheorem{conjecture}{Conjecture}
\theoremstyle{remark}
\newtheorem{remark}{Remark}
\theoremstyle{definition}
\newtheorem{defn}[equation]{Definition}
\newtheorem*{theorem1}{Theorem 1}
\newtheorem*{lemma1}{Lemma 1}
\newtheorem*{cor1}{Corollary 1}
\newtheorem*{theorem2}{Theorem 2}

%

% The correct dates will be entered by Springer
%
% Add name of the expert who has communicated your paper
%\communicated{name}
%

\begin{abstract}
The gap function of a domain $\Omega \subset \R^n$ is 
$$\xi(\Omega) := d^2 (\lambda_2 - \lambda_1),$$
where $d$ is the diameter of $\Omega$, and $\lambda_1$ and $\lambda_2$ are the first two positive Dirichlet eigenvalues of the Euclidean Laplacian on $\Omega$.  It was recently shown by Andrews and Clutterbuck \cite{ac} that for any convex $\Omega \subset \R^n$, 
$$\xi(\Omega) \geq 3 \pi^2,$$
where the infimum occurs for $n=1$.  On the other hand, the gap function on the moduli space of $n$-simplices behaves differently.  Our first theorem is a compactness result for the gap function on the moduli space of $n$-simplices.  Next, specializing to $n=2$, our second main result proves the recent conjecture of Antunes-Freitas \cite{af}:  \em for any triangle $T \subset \R^2$, 
$$\xi(T) \geq \frac{64 \pi^2}{9},$$
with equality if and only if $T$ is equilateral.  \em  
\end{abstract}

\maketitle
\section{Introduction}
Let $\Omega \subset \R^n$ be a convex domain.  Let $\lambda_1$ and $\lambda_2$ be the first two eigenvalues of the Euclidean Laplacian on $\Omega$ with Dirichlet boundary condition.  It is a classical result that $0 < \lambda_1 < \lambda_2$.  The \em gap \em between $\lambda_1$ and the rest of the spectrum, 
$$\lambda_2 - \lambda_1$$
is known as the \em fundamental gap of $\Omega$.  \em  The \em gap function \em 
$$\xi(\Omega) = d^2 (\lambda_2 - \lambda_1),$$
where $d$ is the diameter of $\Omega$.  The gap function is a scale invariant:  it is purely determined by the shape of the domain.  Physically, if we consider heating the domain at some initial time and then keeping the boundary of the domain fixed at zero temperature, the fundamental gap determines the rate at which the overall heat in the domain vanishes as time tends to infinity.  It is natural to ask the following question.  
\begin{quote} 
How does the shape of a convex domain affect the rate at which it loses heat over a long period of time?   
\end{quote}
The mathematical formulation of this question is:
\begin{quote}
 What is the relationship between the geometry of a convex domain $\Omega \subset \R^n$ and $\xi(\Omega)$?  
\end{quote}

M. van den Berg \cite{mvdb} observed that for many convex domains, the gap function is bounded below by a constant.  For example, consider a rectangular domain $R \subset \R^2$, 
$$R = \left\{ (x, y) \in \R^2 \, \mid \, 0 \leq x \leq a, \, 0 \leq y \leq b \right\}.$$
Using separation of variables, it is straightforward to compute that the eigenfunctions and corresponding eigenvalues of the rectangle are 
$$\phi_{j,k} (x,y) = \sin\left( \frac{jx\pi}{a} \right) \sin\left(\frac{ky\pi}{b} \right), \quad \lambda_{j,k} = \frac{j^2 \pi^2}{a^2} + \frac{k^2 \pi^2}{b^2}, \quad j,k \in \N.$$
Making the additional assumption $b \leq a$, one computes the gap function of the rectangle $R$, 
$$\xi(R)= \frac{3\pi^2(a^2 + b^2)}{a^2}.$$
If we then think about the gap function on all possible rectangles $R$, we see that the square uniquely maximizes the gap function with 
$$\xi(\textrm{Square}) = 6 \pi^2.$$
On the other hand, if a rectangle collapses to a segment, by letting $b \downarrow 0$, then $\xi \downarrow 3 \pi^2$.  An even more elementary example is the segment.  The gap function on any (finite) segment $[a,b]$ with $a<b$ is 
$$3 \pi^2.$$
Perhaps based on this intuition, Yau formulated the \em fundamental gap conjecture \em in \cite{yau} which was recently proven by Andrews and Clutterbuck \cite{ac}.
\begin{theorem}[Andrews-Clutterbuck] 
 For any convex domain in $\R^n$, the gap function is bounded below by $3\pi^2$.  
\end{theorem}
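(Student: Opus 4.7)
The plan is to reduce the problem to a one-dimensional comparison via a modulus-of-continuity argument for the ratio of the first two eigenfunctions. Write $\phi_1, \phi_2$ for the first two Dirichlet eigenfunctions of $\Omega$, and consider $v = \phi_2/\phi_1$. A short computation shows that $v$ satisfies the drift-diffusion equation
\begin{equation*}
\Delta v + 2 \nabla \log \phi_1 \cdot \nabla v = -(\lambda_2 - \lambda_1) v
\end{equation*}
on $\Omega$, with Neumann data on $\partial \Omega$ (after taking limits to the boundary). Thus the fundamental gap is exactly the first nontrivial eigenvalue of the drift Laplacian $L_{\phi_1} u := \Delta u + 2 \nabla \log \phi_1 \cdot \nabla u$, and the goal becomes a sharp lower bound $\lambda_2 - \lambda_1 \geq 3\pi^2/d^2$ for the gap of this weighted operator.

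The extremal one-dimensional model is the interval $I = [-d/2, d/2]$, where the first eigenfunction is $\cos(\pi x/d)$, the second is $\sin(2\pi x/d)$, and the gap equals $3\pi^2/d^2$. Let $\psi$ denote the corresponding ``gap eigenfunction'' on $I$, so that $\psi'$ serves as the model modulus of continuity. My plan is to prove that any eigenfunction $v$ of $L_{\phi_1}$ associated to $\lambda_2 - \lambda_1$ has its modulus of continuity dominated by $\psi$. Concretely, I would consider the parabolic flow $\partial_t w = L_{\phi_1} w$ (which decays to the mean at the rate given by the spectral gap) and apply a two-point maximum principle to
\begin{equation*}
Z(x,y,t) = w(y,t) - w(x,t) - 2 \psi\!\left( \tfrac{|y-x|}{2} \right).
\end{equation*}
If $Z \leq 0$ is preserved in time, then letting $t \to \infty$ forces the spectral gap of $L_{\phi_1}$ to be at least the spectral gap of the one-dimensional model, namely $3\pi^2/d^2$.

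To execute the two-point maximum principle, at an interior point of maximum I would diagonalize the Hessian of $Z$ in the axis joining $x$ and $y$, and use the log-concavity of $\phi_1$ to control the drift contribution $\nabla \log \phi_1(y) \cdot \nabla_y Z - \nabla \log \phi_1(x) \cdot \nabla_x Z$. The critical one-dimensional log-concavity input is that $(\log \phi_1)'$ for the cosine on $I$ gives exactly the drift term that makes the inequality tight, so I would need $\log \phi_1$ on $\Omega$ to be at least as concave along the segment from $x$ to $y$ as the model cosine on a segment of length $|y-x|$. Boundary behavior must also be handled: if the maximum of $Z$ occurs with $x$ or $y$ on $\partial \Omega$, convexity of $\Omega$ and the fact that $v$ extends continuously must be used to show that such a boundary maximum cannot give a violation.

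The main obstacle is precisely this log-concavity comparison. It is not hard to show via Brascamp–Lieb / Korevaar-type arguments that $\phi_1$ is log-concave on a convex $\Omega$, but the sharp comparison $(\log \phi_1)''|_{\text{segment}} \leq -\pi^2/|y-x|^2 \cdot(\text{something})$ matched to the one-dimensional model is delicate, and one typically phrases it as an inequality for the modulus of $\nabla \log \phi_1$ compared against $\pi \tan(\pi s/d)$ for the segment of length $d$. The second, more routine but technical difficulty is verifying that the chosen $\psi$ is admissible at $t=0$ (by taking the initial datum to be a suitable smoothing of $v$) and that all the preservation inequalities extend to the closure of $\Omega$. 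Once both the log-concavity comparison and the two-point maximum principle are in place, the bound $\xi(\Omega) \geq 3 \pi^2$ follows immediately, with asymptotic equality attained only by degenerate (collapsing) convex domains, in particular the segment limit.
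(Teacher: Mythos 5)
The paper does not prove this statement; it is Andrews--Clutterbuck's theorem, quoted and cited as \cite{ac}. There is therefore no internal proof to compare your attempt against. That said, your sketch does outline the actual Andrews--Clutterbuck strategy with reasonable fidelity: pass to $v=\phi_2/\phi_1$, identify the fundamental gap with the first nonzero Neumann eigenvalue of the drift Laplacian $\Delta + 2\nabla\log\phi_1\cdot\nabla$, and run a two-point maximum principle on a modulus-of-continuity comparison $Z(x,y,t)=w(y,t)-w(x,t)-2\psi(|y-x|/2)$ against the one-dimensional model on the diameter interval, with the sharp log-concavity of $\phi_1$ as the crucial input.

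However, as a proof the sketch does not close, and the gap is exactly where you flag it. The sharp log-concavity estimate you need is not a routine Brascamp--Lieb/Korevaar statement (those give $(\log\phi_1)''\leq 0$, which is not strong enough). What is required is the quantitative, dimension-reduced statement that the modulus of concavity of $\log\phi_1$ is dominated by that of $\log\cos(\pi s/d)$, where $d=\diam\Omega$ (note: comparison is against the model on the \emph{full diameter interval} $[-d/2,d/2]$, not against a model on a segment of length $|y-x|$ as your sketch suggests --- the latter is not what one needs nor what one can prove). Establishing this comparison is itself the central new theorem of Andrews--Clutterbuck and is proved by a second, independent two-point maximum-principle argument applied to $\log\phi_1$ itself. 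Without that ingredient your argument is circular: you are assuming precisely the sharp concavity input that makes the two-point inequality tight. A second, smaller point: the model ``gap eigenfunction'' $\psi$ must be an eigenfunction of the one-dimensional \emph{drift} operator with weight $\cos(\pi s/d)$ on $[-d/2,d/2]$, not simply $\sin(2\pi x/d)$ for the Euclidean Laplacian; that distinction is what makes the one-dimensional gap come out to $3\pi^2/d^2$ in the weighted setting and match the two-point comparison structurally.

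Since the paper itself only invokes this theorem, no comparison to ``the paper's own proof'' is possible; but if you intend your sketch to be self-contained you must supply the sharp log-concavity comparison, which is essentially the entire technical content of \cite{ac}.
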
    

This result shows that among \em all convex domains, \em the gap function is minimized in dimension $1$.  If the gap function is restricted to a certain moduli space of convex domains, what are its properties?  

In this work, we focus on the gap function \em restricted to the moduli space of $n$-simplices \em and in particular, the \em moduli space of Euclidean triangles.  \em     Recall that an $n$-simplex $X$ is a set of $n+1$ vectors $\{v_0,\cdots,v_n\}$ in $\mathbb R^n$ such that
$v_1-v_0,\cdots,v_n-v_0$ are linearly independent. The convex domain
\[
\left\{\left . \sum_{j=0}^n t_j v_j \right| \sum_{j=0} ^n t_j=1, \,  t_j\geq 0
\textrm{ for } 0 \leq j \leq n \right\}
\]
defined by $X$ is bounded with piecewise smooth boundary.  For the
sake of simplicity, we don't distinguish the simplex $X$ with the
domain it defines.  The moduli space of $n$-simplices is the set of all similarity classes of $n$-simplices; it is parametrized by the set of $n$-simplices with diameter equal to one.  We note that in case $n=2$, this theorem is straightforward to deduce from the main result of \cite{strip}.  

\begin{theorem} \label{th:simplex}
Let $Y$ be an $n-1$ simplex for some $n \geq 2$.  Let $\{X_j \}_{j \in
\N}$ be a sequence of $n$ simplices each of which is a graph over $Y$.
 Assume the height of $X_j$ over $Y$ vanishes as $j \to \infty$.  Then
$\xi(X_j) \to \infty$ as $j \to \infty$.  More precisely, there is a
constant $C>0$ depending only on $n$ and $Y$ such that
$\xi(X_j)\geq C h(X_j)^{-4/3}$, where $h(X_j)$ is the height of $X_j$.
\end{theorem}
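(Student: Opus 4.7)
The plan is to reduce the eigenvalue problem on the thin simplex $X_j$ to a one-dimensional Schr\"odinger problem on the base $Y$ via a Born--Oppenheimer-type adiabatic argument, and then read off the scaling of the gap from the piecewise-linear geometry of the effective potential. Write $X_j = \{(y,t) : y \in Y,\ 0 \leq t \leq f_j(y)\}$; since $X_j$ is the convex hull of $Y$ together with a single apex, the height function splits as $f_j(y) = h_j\, g_j(y)$, where $g_j : Y \to [0,1]$ is piecewise linear, vanishes on $\partial Y$, and attains its unique interior maximum $g_j(v_\perp^{(j)}) = 1$ at the orthogonal projection of the apex. Rescaling $t = h_j s$ turns $-\Delta$ on $X_j$ into $-\Delta_y - h_j^{-2}\partial_s^2$ on the reference region $\{(y,s) : 0 \leq s \leq g_j(y)\}$, so the transverse part dominates as $h_j \to 0$.

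Second, I would introduce the effective operator
\[
\mathcal{H}_j = -\Delta_y + V_j(y), \qquad V_j(y) = \frac{\pi^{2}}{h_j^{2}\, g_j(y)^{2}},
\]
on $H^{1}_{0}(Y)$ and compare $\lambda_{k}(X_{j})$ with $\mu_{k}(\mathcal{H}_{j})$ for $k=1,2$. Decomposing any admissible $u$ on $X_j$ slice-wise as $u(y,t) = \phi(y)\psi_{1}(y,t) + u^{\perp}(y,t)$, with $\psi_{1}(y,t) = \sqrt{2/f_j(y)}\sin(\pi t/f_j(y))$ the transverse Dirichlet ground state and $u^{\perp}$ orthogonal to $\psi_1$ in every vertical slice, one obtains
\[
\int_{X_{j}} |\partial_{t} u^{\perp}|^{2} \geq \frac{4\pi^{2}}{h_j^{2}}\int_{X_{j}} |u^{\perp}|^{2}.
\]
A crude trial function shows $\lambda_2(X_j) \leq \pi^2/h_j^2 + O(h_j^{-4/3}) \ll 4\pi^2/h_j^2$, so the $u^\perp$-part of any Rayleigh minimizer must be small, and the Rayleigh quotient of $\phi$ matches that of $\mathcal{H}_j$ up to errors controlled by $|\nabla_y \psi_1|$. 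A parallel upper bound, using test functions $\phi(y)\psi_1(y,t)$ with $\phi$ an eigenfunction of $\mathcal{H}_j$, yields $|\lambda_k(X_j) - \mu_k(\mathcal{H}_j)| = o(h_j^{-4/3})$ for $k=1,2$.

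Third, I would analyze $\mathcal{H}_j$ itself. Near the unique minimum $v_\perp^{(j)}$ of $V_j$, the piecewise-linear structure of $g_j$ gives
\[
V_{j}(y) - \frac{\pi^{2}}{h_j^{2}} = \frac{2\pi^{2}}{h_j^{2}}\, \ell_{j}(y - v_\perp^{(j)}) + O\!\left(\frac{|y-v_\perp^{(j)}|^{2}}{h_j^{2}}\right),
\]
where $\ell_j(\xi) = |\xi|/D_{j}(\xi/|\xi|)$ is positively $1$-homogeneous and piecewise linear, with $D_j(u)$ the distance from $v_\perp^{(j)}$ to $\partial Y$ along the direction $u$. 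Since $D_j(u) \leq \diam(Y)$ always, one has the crucial uniform bound $\ell_j(\xi) \geq \diam(Y)^{-1}|\xi|$. Rescaling $y - v_\perp^{(j)} = h_j^{2/3}\xi$ converts $\mathcal{H}_j - \pi^2/h_j^2$ into $h_j^{-4/3}[\,-\Delta_{\xi} + 2\pi^{2}\ell_{j}(\xi)\,]$ plus lower-order corrections; the $\xi$-operator has a confining potential uniformly bounded below by $2\pi^2 \diam(Y)^{-1}|\xi|$, hence compact resolvent and a strictly positive fundamental gap $\gamma_0 > 0$ depending only on $n$ and $\diam(Y)$. Agmon-type exponential decay at the Airy scale $h_j^{2/3}$ confirms that the behaviour of $g_j$ far from $v_\perp^{(j)}$ and near $\partial Y$ is negligible, so $\mu_2(\mathcal{H}_j) - \mu_1(\mathcal{H}_j) \geq C h_j^{-4/3}$. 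Since $d(X_j)^2 \leq \diam(Y)^2 + h_j^2$ is bounded, this gives $\xi(X_j) \geq C h_j^{-4/3}$ as claimed.

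The main technical obstacle is controlling the Born--Oppenheimer error uniformly, and in particular showing that the remainder in the Rayleigh-quotient decomposition is genuinely of order $o(h_j^{-4/3})$. The dangerous contribution is $|\nabla_y \log g_j|^{2}$, which is singular at $\partial Y$ (where $g_j \to 0$) and at the apex projection $v_\perp^{(j)}$ (where $g_j$ is only Lipschitz). Outside an $O(h_j^{2/3})$-ball around $v_\perp^{(j)}$, Agmon decay of the effective eigenfunctions suppresses this error; inside the ball one must track one-sided directional derivatives of $g_j$ to ensure the Airy rescaling is justified. Uniformity of $C$ as the apex projection varies — even as it approaches $\partial Y$ — is salvaged by the fact that the coercivity constant $\diam(Y)^{-1}$ of $\ell_j$ depends only on $Y$, as observed above.
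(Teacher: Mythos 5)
Your proposal takes a genuinely different route from the paper. You reduce the thin-simplex problem to an effective Schr\"odinger operator on the base $Y$ via an adiabatic (Born--Oppenheimer) decomposition, then extract the $h^{-4/3}$ scaling from an Airy-type rescaling of the piecewise-linear effective potential near the apex projection. This is in the spirit of the Friedlander--Solomyak narrow-strip analysis, which the authors themselves note would recover the $n=2$ case. The paper instead proceeds by a localization-plus-comparison argument: it shows quantitatively (the ``cutting corners'' claim) that at least $90\%$ of the $L^2$ mass of $\phi_1,\phi_2$ lies in the set $U$ where the simplex height exceeds $h(1-ch^{2/3})$, a convex region of diameter $O(h^{2/3})$; it then passes to $\psi=\phi_2/\phi_1$, interprets $\lambda_2-\lambda_1$ as a Neumann drift-Laplacian eigenvalue, compares this with the drift Laplacian on $U$ via the weighted variational principle, and invokes Andrews--Clutterbuck on $U$ to get $\lambda_2-\lambda_1\geq \tfrac{1}{2}\cdot 3\pi^2/d(U)^2 \gtrsim h^{-4/3}$. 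Both routes identify the $h^{2/3}$ Airy length scale, but the paper imports all the hard spectral analysis from \cite{ac} applied to a tiny convex set, while your route must reconstruct the gap of a limiting conical Schr\"odinger operator from scratch.

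That said, your write-up has real gaps that go beyond bookkeeping and would need to be filled before this counts as a proof. First, the claim that the limiting operator $-\Delta_\xi+2\pi^2\ell_j(\xi)$ has a fundamental gap bounded below uniformly in $j$ does not follow merely from the coercivity $\ell_j(\xi)\geq \diam(Y)^{-1}|\xi|$: pointwise lower bounds on a potential control the eigenvalues themselves, not their differences. What saves you here is that $\ell_j$ is the Minkowski gauge of $Y$ centered at the apex projection, hence \emph{convex}; you would need to invoke (or prove) a gap bound for Schr\"odinger operators with convex linearly-growing potentials, with a constant depending only on the coercivity, and also handle the degeneration as the apex projection approaches $\partial Y$, where $\ell_j$ becomes unbounded on the unit sphere and part of the Dirichlet boundary survives the Airy rescaling. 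Second, the Born--Oppenheimer error bound $|\lambda_k(X_j)-\mu_k(\mathcal H_j)|=o(h_j^{-4/3})$ is asserted but not established; the dangerous term $|\nabla_y\log g_j|^2$ is singular both at $\partial Y$ and along the facets where $g_j$ fails to be $C^1$, and showing Agmon decay suppresses it requires first knowing that the effective eigenfunctions localize at scale $h^{2/3}$ — which is circular unless the effective-operator analysis is completed first. Neither gap is fatal to the strategy, but as written the argument is a plan, not a proof; the paper's localization-and-drift-Laplacian route sidesteps both issues by never forming the effective operator.
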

 
Since any triangle with unit diameter is a graph over the unit interval, this theorem implies that \em there exists at least one triangle which minimizes the gap function on the moduli space of triangles.  \em   The \em moduli space of triangles \em is the set of all similarity classes of triangles, which we identify with 
$$\mathfrak{M} \cong \left\{ (x,y) \in \R^2 : y>0, \quad \frac{1}{2} \leq x \leq 1, \quad x^2 + y^2 \leq 1\right\},$$
where the vertices of a triangle in each similarity class are $(0,0)$, $(1, 0)$ and $(x,y)$.  The following result shows that the gap function on triangular domains is more than twice as large as the gap function on a generic convex domain; the theorem was conjectured in \cite{af}.  

\begin{theorem}\label{th:emin}  
For any triangle $T$ with unit diameter, 
$$\xi(T) \geq \frac{64 \pi^2}{9},$$
where equality holds iff $T$ is equilateral. 
\end{theorem}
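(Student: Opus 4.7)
The plan is to combine Theorem~\ref{th:simplex} with the explicit spectrum of the equilateral triangle and a Hadamard-variational analysis on $\mathfrak{M}$. By Theorem~\ref{th:simplex}, $\xi(T)\to\infty$ as $T$ approaches the degenerate edge $\{y=0\}$, while standard continuity of Dirichlet eigenvalues makes $\xi$ continuous on the non-degenerate part of $\mathfrak{M}$, so a minimizer $T_0\in\mathfrak{M}$ exists. At the equilateral triangle $E=(1/2,\sqrt{3}/2)$, Lam\'e's classical diagonalization gives $\lambda_{m,n}=\frac{16\pi^2}{9}(m^2+mn+n^2)$ for the spectrum with side length $1$, so $\lambda_1=48\pi^2/9$ and $\lambda_2=112\pi^2/9$ (doubly degenerate). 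Since $d(E)=1$, this yields $\xi(E)=64\pi^2/9$ and the upper bound $\xi(T_0)\le 64\pi^2/9$; it remains to show $T_0=E$.

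A useful observation is that the base edge is the longest edge throughout $\mathfrak{M}$: the defining inequalities $x^2+y^2\le 1$ and $x\ge 1/2$ imply $(x-1)^2+y^2\le 2-2x\le 1$ as well. Thus $d\equiv 1$ on $\mathfrak M$ and $\xi=\lambda_2-\lambda_1$. For a scalene $T_0$ in the interior of $\mathfrak M$, both $\lambda_1$ and $\lambda_2$ are simple, so Hadamard variation gives
$$\partial_\alpha\lambda_i(T)=-\int_{\partial T}|\partial_\nu\phi_i|^2\,(V_\alpha\cdot\nu)\,dS,\qquad\alpha\in\{x,y\},$$
with $V_\alpha$ perturbing the apex vertex. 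The critical-point equation $\nabla\lambda_1(T_0)=\nabla\lambda_2(T_0)$ becomes two integral identities linking $|\partial_\nu\phi_1|^2$ and $|\partial_\nu\phi_2|^2$ on the two slanted edges. Combining these with Rellich--Pohozaev identities (which normalize each edge integral against the corresponding $\lambda_i$) and nodal information for $\phi_2$, I would argue that such identities force an axial reflective symmetry of the eigenfunctions, and hence $T_0$ lies on one of the isoceles arcs $x=1/2$ or $x^2+y^2=1$.

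On the arc $x=1/2$ (parametrized by $y\in(0,\sqrt{3}/2]$), reflection across the vertical axis splits the spectrum into even and odd parts: $\lambda_1$ is the first even eigenvalue, and $\lambda_2$ is the smaller of the first odd eigenvalue and the second even eigenvalue. The first odd eigenvalue is the first Dirichlet eigenvalue of the right half-triangle, while the second even eigenvalue is the second eigenvalue of that same half-triangle with Neumann condition imposed on the symmetry axis; both branches are monotone in $y$ by domain monotonicity, and they meet at $y=\sqrt{3}/2$ with common value $112\pi^2/9$, reflecting the double degeneracy at $E$. A quantitative comparison of these branch slopes against $d\lambda_1/dy$ at $E$ yields that $\xi$ is uniquely minimized on this arc at the equilateral corner. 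The arc $x^2+y^2=1$ is handled analogously after a relabelling of vertices.

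The main obstacle is the critical-point reduction in the middle step combined with the sharp first-order analysis at $E$. At $E$ the doubly degenerate $\lambda_2$ demands degenerate perturbation theory to unfold the two eigenvalue branches and to control their splitting against the single simple branch $\lambda_1$; the constancy $d^2\equiv 1$ on $\mathfrak M$ is what allows this splitting to make $E$ a strict local minimum of $\xi$. Away from $E$, the Hadamard--Rellich critical-point analysis hinges on a delicate nodal description of the second eigenfunction on a general scalene triangle, which is the central technical difficulty in excluding non-equilateral critical points.
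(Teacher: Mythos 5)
Your proposal diverges fundamentally from the paper's actual strategy, and the point of divergence is exactly where your argument has a genuine gap. The paper does not attempt a global critical-point analysis. Instead it establishes (i) a compactness result (Theorem~\ref{th:simplex}) forcing large gap for thin triangles, refined in Proposition~\ref{pr:short} to the explicit threshold $h\le 0.005$; (ii) a \emph{quantitative} strict local minimum at the equilateral via the linear deformation operator $L_1$ and explicit Lam\'e eigenfunction integrals, with error control good enough to cover the disc $t\le 0.0004$ around the equilateral (Propositions~\ref{prop:lmin} and~\ref{pr:nequi}); and then (iii) it closes the remaining compact region of $\mathfrak M$ by a finite-element computation of $\lambda_1,\lambda_2$ on a finite grid, using the Lipschitz continuity estimate~(\ref{mesh}) to propagate each pointwise verification to a neighbourhood. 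The proof is therefore a computer-assisted argument, not a pure critical-point classification.

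The gap in your argument sits in the middle step: ``Combining these with Rellich--Pohozaev identities ... and nodal information for $\phi_2$, I would argue that such identities force an axial reflective symmetry of the eigenfunctions.'' This is asserted, not proved, and it is not a routine consequence of the Hadamard and Rellich identities. No proof is known that the only interior critical point of $\xi=\lambda_2-\lambda_1$ on the moduli space of triangles is the equilateral; the needed nodal description of $\phi_2$ on a generic scalene triangle (location and shape of the nodal line, how $|\partial_\nu\phi_i|^2$ distributes over the slanted edges) is precisely the sort of information that is delicate and in general unavailable. Your own closing paragraph concedes this, but flagging a step as ``the central technical difficulty'' does not discharge it. Likewise, on the isoceles arc the claim that both symmetry branches of $\lambda_2$ are ``monotone in $y$ by domain monotonicity'' does not follow: varying the apex height while keeping unit diameter does not produce nested domains, so Dirichlet domain monotonicity does not apply directly, and the quantitative slope comparison against $d\lambda_1/dy$ at $E$ is left unverified. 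Even the degenerate perturbation at $E$, which the paper handles carefully in Proposition~\ref{prop:lmin} by minimizing $-\int\vphi(L_1|_{t=0})\vphi + \int\phi_1(L_1|_{t=0})\phi_1$ over the two-dimensional second eigenspace and all diameter-preserving directions, is only sketched in your proposal.

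In short, your opening moves (compactness for existence of a minimizer, Lam\'e spectrum, $d\equiv 1$ on $\mathfrak M$) align with the paper, but the core of your argument replaces the paper's explicit local estimates plus finite numerical verification with an unproved global rigidity claim for critical points. That claim would be a significant theorem in its own right, and until it is established the proposal does not constitute a proof of Theorem~\ref{th:emin}.
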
  

Let us recall the famous question posed by M. Kac \cite{kac}:
\begin{quote}
Can one hear the shape of a drum?
\end{quote}
 The resonant tones of a domain are in bijection with the eigenvalues of the Euclidean Laplacian with Dirichlet boundary condition.  Therefore, with a \em perfect ear \em that is capable of registering all tones, one can \em hear \em the spectrum, that is, the set of all eigenvalues.  Kac's question is then mathematically reformulated as follows.  
\begin{quote}
 If two domains in $\R^2$ have the same spectrum, do the domains also have the same shape?
\end{quote}
A negative answer to Kac's question was demonstrated by Gordon, Webb and Wolpert \cite{gww}, who showed that there exist isospectral planar domains which are not isomorphic.  On the other hand, Durso \cite{durso} proved that if the two domains are triangles in $\R^2$, and they have the same spectrum, then they must be the same triangle.  The proof uses \em the entire spectrum, \em so we can reformulate her result as follows.
\begin{quote}
With a perfect ear, one can hear the shape of a triangle \cite{durso}.  
\end{quote} 
In practice, however, one does not have a perfect ear.  That is, one may only detect a finite portion of the spectrum.  Our Theorem  \ref{th:emin} implies that the equilateral triangle can be heard with a \em realistic ear, \em because Theorem \ref{th:emin} demonstrates that the gap function alone uniquely distinguishes the equilateral triangle within the moduli space of all triangles.  In fact, we expect that it is possible to distinguish triangles based on a finite number of eigenvalues.  This is supported by numerical data in \cite{af-2},  which shows that one expects that triangles are uniquely determined by their first three eigenvalues.  

Our work is organized as follows.  The compactness result for simplices is proven in \S 2; in \S 3 this is refined to prove that Theorem \ref{th:emin} holds for all sufficiently ``thin'' triangles.  In \S 4, we prove that the equilateral triangle is a \em strict local minimum \em for the gap function on the moduli space of triangles, and in \S 5 we determine a lower bound for the radius of the neighborhood in the moduli space of triangles on which the equilateral triangle is a strict local minimum.  Finally, in \S 6, we provide an algorithm to complete the proof of Theorem \ref{th:emin}.  Concluding remarks and conjectures are offered in \S 7.    

\section*{Acknowledgements}  
We are deeply grateful to Timo Betcke for assisting our numerical calculations in \S 6.  The first author is supported by NSF grant DMS-12-06748, and the second author acknowledges the support of the Max Planck Institut f\"ur Mathematik in Bonn.  Both authors would like to thank Gilles Carron and Pedro Freitas for insightful comments.  

%%%%%%%%%%%%%%%%%%%%%%%%%%%%%%
\section{A compactness result for the gap of simplices} 
Let us first fix the notation.  The Laplace operator on $\R^n$ is 
$$\Delta = \sum_{i=1} ^n \frac{\pa^2}{\pa x_i ^2},$$
The Dirichlet (respectively, Neumann) eigenvalues of the Laplace operator are the real numbers $\lambda$ for which there exists an eigenfunction
$$u \in \cC^{\infty} (\Omega) \textrm{ such that } -\Delta u = \lambda u \textrm{ and } \left. u \right|_{\pa \Omega} = 0, \textrm{ (respectively, } \left. \frac{\pa u}{\pa n} \right|_{\pa \Omega} =0).$$

We shall use $\lambda$ to denote Dirichlet eigenvalues, $\mu$ to denote Neumann eigenvalues, and index the Dirichlet eigenvalues by $\N$ and the Neumann eigenvalues by $0 \cup \N$.  The Dirichlet and Neumann\footnote{Note that the Neumann boundary condition is automatically satisfied if no boundary condition is imposed in the variational principle.}  eigenvalues, respectively, satisfy the following variational principles \cite{chavel},
\begin{equation} \label{varprin} \lambda_k = \inf_{\varphi \in \cC^1 (M)} \left\{ \left. \frac{ \int_{M} |\nabla \varphi |^2 }{\int_{M} f^2 } \, \right| \, \left. \,  \varphi \right|_{\pa M} = 0, \, \varphi \not\equiv 0 = \int_{M} \varphi \phi_j, \, 0 \leq  j < k \right\},\end{equation} 
$$\mu_j = \inf_{\varphi \in \cC^1 (M)} \left\{   \left. \frac{ \int_{M} |\nabla \varphi |^2 }{\int_{M} \varphi^2 } \, \right| \, \,  \varphi \not\equiv 0 = \int_{M} \varphi \varphi_l, \, -1 \leq l < j \right\},$$
for $k\geq 1$ and $j\geq 0$ where $\phi_j$ and $\varphi_l$ are, respectively, eigenfunctions for $\lambda_j$ and $\mu_l$ (assuming that
$\phi_0\equiv 0$ and $\varphi_{-1}\equiv 0$).
The well known property of \em domain monotonicity \em for the Dirichlet eigenvalues is that, if a domain $\Omega \subset \Omega^*$, then 
$$\lambda_k (\Omega) \geq \lambda_k (\Omega^*).$$

In \cite{lr}, we demonstrated the following \em weighted variational principle \em for so-called \em drift Laplacians.  \em  Given a \em weight function \em $\phi$, the drift Laplacian with weight $\phi$ is 
$$\Delta_\phi := \Delta - \n \phi \cdot \n.$$
The Dirichlet and Neumann eigenvalues of the drift Laplacian satisfy the following variational principles.  
$$\lambda_k =  \inf_{\varphi \in \cC^1(M)}  \left \{ \left . \frac{\int_M|\nabla\varphi|^2 e^{-\phi} }{\int_M\varphi^2 e^{-\phi}} \, \right|  \, \,  \varphi \not\equiv 0 = \int_{M} \varphi \phi_j e^{-\phi}, \, \, 0 \leq j < k, \, \varphi |_{\pa M} = 0 \right\},
$$
$$\mu_k =  \inf_{\varphi \in \cC^1(M)}  \left \{ \left . \frac{\int_M|\nabla\varphi|^2 e^{-\phi} }{\int_M\varphi^2 e^{-\phi}} \, \right|  \, \,  \varphi \not\equiv 0 = \int_{M} \varphi \varphi_j e^{-\phi}, \, -1 \leq j < k \right\}
$$
for $k\geq 1$ and $l\geq 0$, where $\varphi_j$ achieves the infimum for $k=j$ (and as above
$\phi_0\equiv 0$ and $\varphi_{-1}\equiv 0$).  Finally, throughout this paper we will use the following notations:  for a function $f(t)$ and fixed $k \geq 0$, 
$$f(t) = O(t^k) \textrm{ as } t \to 0 \textrm{ if there exist } C, \delta > 0 \textrm{ such that } |f(t)| \leq Ct^k \textrm{ for all } |t| \leq \delta;$$
$$f(t) = o(t^k) \textrm{ as } t \to 0 \textrm{ if } \lim_{t \to 0} \frac{f(t)}{t^k} = 0.$$   

%%%%%%%%%%%%%%
\subsection{Proof of Theorem \ref{th:simplex}}  
To prove Theorem \ref{th:simplex}, we show that if a sequence of $n$-simplices collapse, there exists $C>0$ such that $\xi(X_j) \geq C h(X_j)^{-4/3}$, where the height $h$ of the simplex (defined in the arguments below) vanishes as $j \to \infty$.  For simplicity in notation, let us drop the subscript.  We may assume that the simplex is defined by the points 
$$\{ \bp^j \}_{j=0} ^n \subset \R^n, \quad \bp^0 = 0,$$
such that 
$$\bp^j = \sum_{i=1} ^n p^j _i \be^i, \quad p^j _n= 0, \quad 1 \leq j \leq n-1,$$
where $\{ \be^i \}_{i=1} ^n$ are the standard basis of $\R^n$.  In other words, $\bp^0, \ldots, \bp^{n-1}$ are contained in the span of $\{ \be^i\}_{i=1} ^{n-1}$.  The collapse is described by 
$$|p_n ^n | \to 0.$$
In fact, we may assume without loss of generality that the simplex is contained in the set of points
$$\left\{ \left. \bx \in \R^n \right| \, \,  \bx = \sum_{k=1} ^n x_k \be^k, \quad x_n \geq 0\right\}.$$
Then, for any point 
$$\bx \in X, \quad \bx = \sum_{k=1} ^{n} x_k \be^k,$$
the \em height \em of $\bx$ 
$$h(\bx) := x_n.$$
The height of the simplex itself is defined to be 
$$h = h(X) := h(\bp^n) = p_n ^n.$$
Since the simplex collapses, we assume in the remaining arguments that $h < 0.1$.  

Let $\lambda_i$, $i=1,2$, be the first and second Dirichlet eigenvalues of $X$ with corresponding eigenfunctions $\phi_i$ such that $\int_X \phi_i ^2 = 1$.  In the following claim, we demonstrate the quantitative estimate that at least 90 \% of the mass of the eigenfunctions $\phi_1$ and $\phi_2$ is contained in a cylinder around $\bp^n$ intersected with $X$.   We call this estimate ``cutting corners'' because it shows that we may ``cut off the corners'' and use the cylinder to estimate the gap.  
Let
\[
\tilde\bp :=\sum_{i=1}^{n-1} p^n_i\be ^i, \]
and let  \[ B_{ch^{2/3}} ^{n-1} (\tilde\bp)\] 
be the $(n-1)$ dimensional ball in the space spanned by $\be^1,\cdots,\be^{n-1}$.  The constant $c$ will be chosen later.  We define $U$ to be the intersection of the cylinder with base $B_{ch^{2/3}} ^{n-1} (\tilde\bp)$ and height $h$ with $X$, 
$$U := \left( B_{ch^{2/3}} ^{n-1} (\tilde\bp) \times I_h \right) \cap X,$$
where $I_h$ is the interval of length $h$. 
Let 
$$V := X \setminus U,$$
and let 
$$\beta := \max \left\{ \int_V \phi_1 ^2, \, \, \int_V \phi_2 ^2 \right\}.$$

\textbf{Claim:  }  There exists a constant $A$ which depends only on $n$ and $Y$ such that if 
$$c>A \textrm{ and } h \leq \left( \frac{1}{2c} \right) ^{3/2},$$
then 
$$\beta < \frac{1}{10}.$$

\textbf{Proof of Claim:  }  
We shall begin by assuming 
$$h \leq \left( \frac{1}{2c} \right) ^{3/2},$$ 
which guarantees 
$$1 - ch^{2/3} \geq \frac{1}{2}.$$
By definition of the simplex as the convex hull of its defining points, since $\bp^0, \ldots, \bp^{n-1}$ are contained in the span of $\be^1, \ldots , \be^{n-1}$, the diameter of the simplex is $1$, and $h \leq 0.1$, it follows that %%every face of the simplex is a triangle, so it follows from basic trigonometry%%%%
\begin{equation} \label{eq:hc} h(\bx) \leq h (1- ch^{2/3}), \quad \forall \bx \in V. \end{equation} 
By the one dimensional Poincar\'e inequality and since $\int_U \phi_i ^2 = 1 - \int_V \phi_i ^2$, 
\begin{equation} \label{eq:pi1} \lambda_i \geq  \frac{\pi^2}{h^2} \left( 1 - \int_V \phi_i ^2 \right) + \frac{\pi^2}{h^2 (1-ch^{2/3})^2} \int_V \phi_i ^2, \quad i=1,2. \end{equation} 
On the other hand, $X$ contains a cylinder  %draw a picture, use similar triangles%  
$$\Sigma \cong [0, h(1-h^{2/3})] \times h^{2/3} Y,$$
where $h^{2/3} Y$ is the base scaled by $h^{2/3}$.  One computes explicitly
\begin{equation} \label{eq:pi2} \lambda_2 (\Sigma) = \frac{\pi^2}{h^2(1-h^{2/3})^2} + \frac{C_2}{h^{4/3}}, \end{equation}  %length scaled by c, then eigenvalues scaled by c^{-2}%
where $C_2$ is the second Dirichlet eigenvalue of $Y$.  Consequently, (\ref{eq:pi1}) and (\ref{eq:pi2}) imply that for $i=1,2$, 
$$\frac{\pi^2}{h^2} \left( 1- \int_V \phi_i ^2 \right)+ \frac{\pi^2}{h^2 (1-ch^{2/3})^2} \int_V \phi_i ^2 \leq \lambda_i \leq \lambda_2 (\Sigma) = \frac{\pi^2}{h^2(1-h^{2/3})^2} + \frac{C_2}{h^{4/3}},$$
which shows that 
$$\left( \frac{\pi^2}{(1-ch^{2/3})^2} - \pi^2 \right) \int_V \phi_i ^2 \leq \frac{\pi^2}{(1-h^{2/3})^2} + C_2 h^{2/3} - \pi^2 \leq \left(C_2 + 3 \pi^2\right) h^{2/3},$$
where the final inequality follows since $h < \frac{1}{10}$.  On the other hand, 
$$h \leq \left( \frac{1}{2c} \right)^{3/2},$$
which shows that 
$$ch^{2/3} \leq \frac{1}{2} \textrm{ and } 2 \pi^2 c h^{2/3} \leq \frac{\pi^2}{(1-ch^{2/3})^2} - \pi^2.$$
We have for $i=1,2$, 
\begin{equation} \label{eq:beta-est}\int_V \phi_i ^2 \leq \frac{C_2 + 3 \pi^2}{2c \pi^2} \implies \beta \leq \frac{C_2 + 3 \pi^2}{2c \pi^2}. \end{equation} 
Therefore, letting 
$$A = \frac{5 (C_2 + 3 \pi^2)}{\pi^2}, \textrm{ then } c>A \textrm{ and } h \leq \left( \frac{1}{2c} \right) ^{3/2} \implies \beta < \frac{1}{10}.$$
\qed

Consider the so-called ``drift Laplacian'' $\Delta_U$ on $U$ with respect to the weight function $f = - 2 \log \phi_1$,  
$$\Delta_U := \Delta + 2 \n \log \phi_1 \n .$$ 
 Let $\mu$ be the first non-zero Neumann eigenvalue of $\Delta_U$ on $U$, and let 
$$\psi := \frac{\phi_2}{\phi_1}.$$
Then, $\psi$ satisfies 
$$\Delta \psi + 2 \n \log \phi_1 \n \psi = -(\lambda_2 - \lambda_1) \psi.$$
Let 
$$\alpha := \frac{\int_U \psi \phi_1 ^2}{\int_U \phi_1 ^2}, \qquad \tilde{\psi} := \psi - \alpha.$$
Then
\begin{equation} \label{eq:ps1} \int_U \tilde{\psi} \phi_1 ^2 = 0. \end{equation} 
Thus, by the weighted variational principle, since $\tilde{\psi}$ satisfies (\ref{eq:ps1}), 
\begin{equation}\label{eq:th3-0} 
\mu \leq \frac{\int_U |\n \tilde{\psi}|^2 \phi_1 ^2}{\int_U \tilde{\psi}^2 \phi_1 ^2}
=\frac{\int_U |\n {\psi}|^2 \phi_1 ^2}{\int_U \tilde{\psi}^2 \phi_1 ^2}.\end{equation}

We have
\begin{equation} \label{eq:th3-1} \int_U |\n \psi|^2 \phi_1 ^2 \leq \int_X |\n \psi|^2 \phi_1 ^2 = (\lambda_2 - \lambda_1) \int_X \psi^2 \phi_1 ^2 = \lambda_2 - \lambda_1. \end{equation}
Using the claim we have, 
$$\int_U \psi^2 \phi_1 ^2 = \int_X \psi^2 \phi_1 ^2 - \int_V \psi^2 \phi_1^2 = 1 - \int_V \phi_2 ^2 > \frac{9}{10}.$$
Since $\int_X \phi_1 \phi_2 = 0$, 
$$\left| \int_U \phi_1 \phi_2 \right| = \left| \int_V \phi_1 \phi_2 \right|,$$
so by the Cauchy inequality, 
$$\alpha = \frac{\int_U \phi_1 \phi_2 }{\int_U \phi_1 ^2} \leq \frac{ \sqrt{\int_V \phi_1^2 } \sqrt{\int_V \phi_2^2} }{9/10} \leq \frac{1}{9}.$$
Thus,
\begin{equation} \label{eq:th3-2}\int_U \tilde{\psi}^2 \phi_1 ^2 \geq \frac{9}{10} - \frac{1}{9} > \frac{1}{2}. \end{equation} 
Putting together (\ref{eq:th3-0}), (\ref{eq:th3-1}), and (\ref{eq:th3-2}), we have
\begin{equation} \label{eq:muleq} \mu \leq 2 (\lambda_2 - \lambda_1).  \end{equation} 
Since $\mu = \lambda_2 (U) - \lambda_1(U)$, and $U$ is convex, by \cite{ac}
\begin{equation} \label{eq:mugeq} \mu \geq \frac{3\pi^2}{d(U)^2}. \end{equation} 
We estimate that   
$$d(U) ^2 \leq  (2c h^{2/3})^2 + h^2.$$
This estimate for $d(U)$ together with (\ref{eq:muleq}) and (\ref{eq:mugeq}) give 
\begin{equation} \label{eq:gap0} \lambda_2 - \lambda_1 \geq \frac{3 \pi^2}{ 2( 2 c h^{2/3})^2 + h^2}. \end{equation} 
Fixing $c$, (\ref{eq:gap0}) demonstrates that $\xi(X) \geq C h^{-4/3} \to \infty$ as $h \to 0$, for a constant $C$ which depends only on $n$ and the base of the simplex $Y$.  
\qed 

%%%%%%%%%%%%%%%%%%%%%%%%%%
\section{Theorem \ref{th:emin} is true for \em short triangles \em}  
Refining estimates from the proof of Theorem \ref{th:simplex}, we demonstrate that if a triangle is sufficiently ``short,'' its fundamental gap is strictly larger than ${64 \pi^2}/{9}$.  

\begin{proposition} \label{pr:short}
Let $T$ be a triangle with vertices $(0,0)$, $(1,0)$, and $(x_0, h)$, where $h\leq 0.005$, and $0.5 \leq x_0 \leq 1$.   Let $\lambda_1$ and $\lambda_2$ be the first two Dirichlet eigenvalues of $T$.  
Then, 
$$\lambda_2 - \lambda_1 > \frac{64 \pi^2}{9}.$$
\end{proposition} 

\begin{proof} 
Define
$$U := \{ (x,y) \in T : x_0-ch^{2/3} \leq x \leq x_0+ch^{2/3} \}, \quad V := T - U,$$
where the constant $c$ will be specified later.  \em The main idea, as in the proof of Theorem 2, is that $\lambda_2 - \lambda_1$  is well approximated by the first positive Neumann eigenvalue of $U$.  \em   Assume the eigenfunctions $\phi_i$ for $\lambda_i$ satisfy 
$$\int_T \phi_i ^2 = 1, \quad i=1,2,$$
and let
$$\beta := \max \left\{ \int_V \phi_i ^2 \right\}_{i=1,2}, \quad \alpha := \frac{\int_U \phi_2 \phi_1 }{ \int_U \phi_1 ^2}, \quad f:= \frac{\phi_2}{\phi_1}.$$
Noting that 
$$\int_U (f-\alpha) \phi_1 ^2 = 0,$$ 
the weighted variational principle for the first positive Neumann eigenvalue $\mu(U)$ of the drift Laplacian $\Delta_U$ with weight function $-2 \log \phi_1$ gives 
$$\mu (U) \leq \frac{ \int_U \ |\n f| \phi_1 ^2}{\int_U (f - \alpha)^2 \phi_1 ^2} \leq \frac{ \int_T |\n f |^2 \phi_1 ^2}{\int_U (f-\alpha)^2 \phi_1^2}= \frac{ \lambda_2 - \lambda_1}{\int_U (f-\alpha)^2 \phi_1 ^2 }.$$
We compute the denominator
$$\int_U (f-\alpha)^2 \phi_1 ^2=  \int_U \phi_2 ^2 - \alpha^2 \left( \int_U \phi_1 ^2 \right) \geq 1-\beta -  \alpha^2,$$
where we have used the definition of $\beta$ with $\int_U \phi_2^2 = 1- \int_V \phi_2 ^2$, together with $\int_U \phi_1 ^2 \leq \int_T \phi_1^2 = 1$.  So, we have 
$$\mu(U) \leq \frac{ \lambda_2 - \lambda_1}{ 1 - \beta - \alpha^2}.$$
By Corollary 1 of \cite{lr} and Corollary 1.4 of \cite{ac}, 
$$\mu(U) \geq \frac{3 \pi^2}{d^2 (U)},$$
which implies
$$\lambda_2 - \lambda_1 \geq (1 - \beta - \alpha^2) \frac{3\pi^2}{d^2 (U)}.$$
Since $\phi_1$ and $\phi_2$ are $\cL^2$ orthogonal,  
$$|\alpha| = \frac{ | \int_V \phi_1 \phi_2|}{\int_U \phi_1 ^2},$$
which by the Cauchy Schwarz inequality and definition of $\beta$ gives 
$$|\alpha| \leq \frac{ \beta}{1-\beta}.$$
Consequently,
\begin{equation}\label{eq:htgap} \lambda_2 - \lambda_1 \geq \left( 1 - \beta - \frac{\beta^2}{(1-\beta)^2} \right) \frac{3 \pi^2}{d^2 (U)}. \end{equation} 
\em Proceeding by contradiction, we assume \em 
\begin{equation} \label{eq:contra} \lambda_2 - \lambda_1 \leq \frac{64 \pi^2}{9} \implies \lambda_2 \leq \lambda_1 + \frac{64 \pi^2}{9}. \end{equation} 
By trigonometry, $T$ contains a rectangle 
$$R \cong [0, h^{2/3}] \times [0, h - h^{5/3}].$$
By domain monotonicity, 
$$\lambda_1 \leq \lambda_1 (R) =  \frac{\pi^2}{h^2 (1-h^{\frac{2}{3}})^2} + \frac{\pi^2}{h^{\frac{4}{3}}}.$$
The height of $V$ is at most 
$$h\left( 1 - c h^{2/3} \right),$$
since $x_0 \in [0.5, 1]$.  By the one dimensional Poincar\'e inequality for $i=1,2$, 
$$\frac{\pi^2}{h^2} \int_U \phi_i ^2 + \frac{\pi^2}{h^2(1 - ch^{\frac{2}{3}})^2} \int_V \phi_i ^2 \leq \lambda_i.$$
Since 
$$\lambda_1 < \lambda_2 \leq \lambda_1 + \frac{64 \pi^2}{9} \leq \lambda_1 (R) + \frac{64 \pi^2}{9},$$ 
by definition of $\beta$, since for either $i=1$ or $i=2$, $ \int_U \phi_i ^2 = 1- \beta$, 
$$  \frac{\pi^2}{h^2} (1-\beta) + \frac{\pi^2}{h^2(1-ch^{\frac{2}{3}})^2} \beta \leq \lambda_2 \leq \frac{\pi^2}{h^2 (1-h^{\frac{2}{3}})^2} + \frac{\pi^2}{h^{\frac{4}{3}}} + \frac{64 \pi^2}{9},$$
so we have
$$\frac{-c^2 h^{4/3} + 2ch^{2/3}}{(1-ch^{2/3})^2} \beta \leq \frac{1}{(1-h^{2/3})^2}  + h^{2/3} + \frac{64 h^2}{9} - 1.$$
Since $h < 0.1$, we have
$$\beta \leq \left( \frac{(1-ch^{2/3})^2}{-c^2 h^{4/3} + 2ch^{2/3}} \right) \left(4h^{2/3} + \frac{64 h^2}{9} \right)$$
which simplifies to 
\begin{equation} \label{beta-1} \beta \leq \left( \frac{ (1-ch^{2/3})^2}{c(2-ch^{2/3})} \right) \left( 4 + \frac{64 h^{4/3}}{9} \right). \end{equation} 
%%%%%
%We compute that for all $x \in [0,1]$, $$0< \frac{(1-x)^2}{2-x} \leq \frac{1}{2}.$$ 
Since we assume $h \leq 0.005$, then for any $c < 34$, $ch^{2/3} <1$.   In particular, fixing $c=10$, we compute that (\ref{beta-1}) gives for any $h \leq 0.005$, 
$$\beta < 0.12.$$
Then, 
$$\left( 1 - \beta - \frac{\beta^2}{(1-\beta)^2} \right) > 0.86,$$
so (\ref{eq:htgap}) becomes 
$$\lambda_2 - \lambda_1 \geq \frac{3*0.86* \pi^2}{d^2(U)}.$$
Since 
$$d^2(U) \leq h^2 + (2ch^{2/3})^2,$$
we compute that for any $h \leq 0.005$, 
$$\lambda_2 - \lambda_1 > \frac{64 \pi^2}{9},$$
which contradicts (\ref{eq:contra}).  Thus if $h \leq 0.005$, then we have $\xi(T) > \frac{64 \pi^2}{9}$.  
\end{proof} 
%%%%%%%%%%%%%%%%%%%%%%%%%%%%%%%%%%

\section{The equilateral triangle is a strict local gap minimizer}
The main result of this section demonstrates that the equilateral triangle is a strict local minimum for the gap function on the moduli space of triangles.  In the proof, we consider all possible linear deformations of the equilateral triangle and demonstrate that in any direction, such a deformation strictly increases the gap function.  

\subsection{Linear deformation theory}  
\begin{figure}[ht]
\centering
\includegraphics[width=7cm]{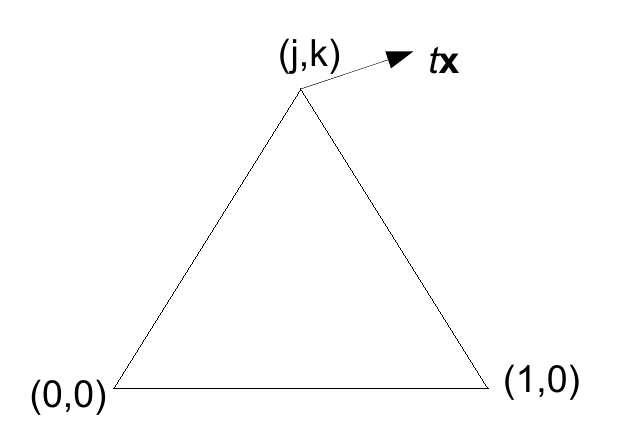}\caption{Linear deformation of a triangle.}\label{tridef}
\end{figure}
Let $T$ be a triangle with vertices $(0, 0),$ $(1,0),$ and $\bz = (j, k),$ and side lengths $A \leq B \leq 1.$    Consider a deformation to the triangle $T(t)$ which has vertices $(0,0),$ $(1,0),$ and $\bz + t \bx,$ where
$$\bx = (a,b), \qquad a^2 + b^2 = 1, \quad t \geq 0.$$
The direction of the deformation is given by $(a, b)$, while the magnitude is given by $t \geq 0$.  The linear transformation which maps the triangle $T$ to the triangle $T(t)$ is represented by the matrix 
$$A=
\begin{bmatrix}
1&\frac{ta}{k}\\
0&1+\frac{tb}{k}
\end{bmatrix}.$$
We may view the linear transformation $T \mapsto T(t)$ as a change of the (Euclidean) Riemannian metric on $\R^2$.  In other words, $T(t)$ is isomorphic to $T$ with the metric, 
$$g = (dx)^2+2\frac{ta}{k} dxdy+\left(\left(1+\frac {tb}{k}\right)^2+\frac{t^2a^2}{k^2}\right)(dy)^2$$
We compute 
$$\det g=\left(1+\frac {tb}{k}\right)^2.$$
Thus 
$$g^{-1}  = \begin{bmatrix} \frac{k^2 + 2bkt + t^2}{(k+bt)^2} & \frac{-tak}{(k+tb)^2} \\  \frac{-tak}{(k+tb)^2} & \frac{k^2}{(k+tb)^2} \end{bmatrix} = \begin{bmatrix} A & B \\ B & D \end{bmatrix}.$$
If the eigenvalues of the original triangle and the deformation triangle are respectively $\lambda_i$ and $\lambda_i (t)$, then they satisfy

\begin{equation} \label{eq:gen-ev-var}
\gamma_- \lambda_i \leq \lambda_i (t) \leq \gamma_+ \lambda_i, 
\end{equation}
where $\gamma_{\pm}$ are the eigenvalues of $g^{-1}$.  It follows that 
$$\left| \lambda_i (t) - \lambda_i \right| \leq (\gamma_+ - \gamma_-) \lambda_i.$$
We compute 
$$\gamma_{\pm} = \frac{A+D}{2} \pm \frac{\sqrt{ (A-D)^2 + 4B^2}}{2}.$$
Substituting the values of $A$, $B$, and $D$ gives 
$$\gamma_+ - \gamma_-= t \frac{\sqrt{4k^2 + t^2 + 4bkt}}{(k+tb)^2},$$
%%% \gamma_+ - \gamma_- = \sqrt{ (A-D)^2 + 4B^2} = t \frac{\sqrt{4k^2 + t^2 + 4bkt}}{(k+bt)^2}%%%
%%%\gamma_+ \gamma_- = \frac{(A+D)^2}{4} - \frac{(A-D)^2 + 4B^2}{4} = AD - B^2 = \frac{k^2}{(k+bt)^2}%%%%%%%
The relationship between integration over $T(t)$ and $T$ differs by a linear factor, 
$$\int_{T(t)} = \left( 1 + \frac{tb}{k} \right) \int_T,$$
where throughout this paper, integration is with assumed to be respect to the standard Lebesgue measure $dx dy$ on $\R^2$.  The Laplace-Beltrami operator associated to a Riemannian metric (in dimension $n$) is 
$$\Delta = \frac{1}{\sqrt{\det(g)}} \sum_{i,j=1} ^n \pa_i g^{ij} \sqrt{\det(g)} \pa_j,$$
so one computes the Laplacian for the deformation metric $g$ is 
$$\Delta = \frac{1}{(1+tb/k)^2} \left( \left( \left( 1 + \frac{tb}{k} \right)^2 + \frac{t^2 a^2}{k^2} \right) \pa_x ^2 - \frac{2ta}{k} \pa_x \pa_y + \pa_y ^2 \right).$$

Henceforth we shall use $\Delta_0 = \pa_x ^2 + \pa_y^2$ for the Euclidean Laplacian, $\Delta$ for the Laplacian associated to a deformation metric, and $\nabla f = (f_x, f_y)$ the gradient (with respect to the standard Euclidean metric).  In \S 6, we shall use this linear deformation theory to complete the proof of Theorem \ref{th:emin}.  Presently, we specialize to the equilateral triangle which we call $T$, and whose vertices are
$$(0,0), \quad (1, 0), \quad \textrm{and} \quad \left( \frac{1}{2}, \frac{\sqrt{3}}{2} \right).$$
A triangle obtained by a linear deformation of $T$, with vertices 
$$(0,0), \quad (1, 0), \quad \textrm{and} \quad \left( \frac{1}{2}, \frac{\sqrt{3}}{2} \right) + t(a,b),$$
is equivalent to $T$ with the metric 
$$
g=(dx)^2+\frac{4ta}{\sqrt{3}} dxdy+\left(\left(1+\frac {2tb}{\sqrt{3}}\right)^2+\frac{4t^2a^2}{3}\right)(dy)^2.$$
We compute, 
$$\det g=\left(1+\frac {2tb}{\sqrt{3}}\right)^2,$$
and 
\begin{equation} \label{eq:alphai} \alpha_i := \frac{ \lambda_i (t) - \lambda_i }{t}, \quad |\alpha_i| \leq \frac{4 \sqrt{3 + 2 \sqrt{3} b t + t^2}}{(\sqrt{3} + 2tb)^2}.  \end{equation}  
The associated Laplace operator 
$$\Delta = \frac{1}{(1+2bt/\sqrt{3})^2} \left( \left( \left( 1 + \frac{2tb}{\sqrt{3}} \right)^2 + \frac{4t^2 a^2}{3} \right) \pa_x ^2 - \frac{4ta}{\sqrt{3}} \pa_x \pa_y + \pa_y ^2 \right).$$
Let
$$L := \frac{1}{\left( 1 + \frac{2tb}{\sqrt{3}} \right)^2} \left( \frac{4ta^2}{3} \pa_x ^2 - \frac{4a}{\sqrt{3}} \pa_x \pa_y - \left( \frac{4b}{\sqrt{3}} + \frac{4tb^2}{3} \right) \pa_y ^2 \right),$$
then
\begin{equation} \label{eq:lapl-var1} \Delta = \pa_x ^2 + \pa_y ^2 + tL = \Delta_0 + tL_1 + t^2 L_2, \end{equation} 
where 
\begin{equation} \label{eq:l1} L_1 = \frac{4\sqrt{3}}{(\sqrt{3} + 2tb)^2} \left( -a \pa_x \pa_y - b \pa_y ^2 \right), \quad L_1(t=0) = \frac{4 \sqrt{3}}{3} \left( -a \pa_x \pa_y - b \pa_y^2 \right).\end{equation}
and
\begin{equation} \label{eq:l2} L_2 = \frac{4}{(\sqrt{3} + 2tb)^2} \left( a^2 \pa_x ^2 - b^2 \pa_y ^2 \right). \end{equation} 
By the variational principle since $\lambda_1$ is smooth, the first eigenvalue for the $T(t)$, which we write as $\lambda_1 (t)$ satisfies
\begin{equation} \label{eq:l1-var} \lambda_1 (t) = \lambda_1 + t \dot{\lambda}_1 + t^2 O_1, \quad \dot{\lambda}_1 = - \int_T \phi_1 \left( L_1|_{t=0} \right) \phi_1, \end{equation} 
where $\phi_1$ is an eigenfunction for $\lambda_1$ with unit $\cL^2$ norm.  If $\lambda_2$ is simple, then we also have
$$\lambda_2 (t) = \lambda_1 + t \dot{\lambda}_2 + t^2 O_2, \quad \dot{\lambda}_2 = - \int_T \phi_2 \left( L_1|_{t=0} \right) \phi_2,$$
where $\phi_2$ is an eigenfunction for $\lambda_2$ with unit $\cL^2$ norm.  

In general, $\lambda_2$ is not differentiable because the second eigenspace may have dimension greater than one; this is the case for the equilateral triangle.  Nonetheless, we may use the variational principle to show that the equilateral triangle is a strict local minimum for the gap function restricted to the moduli space of triangles.  

\begin{theorem} \label{th:lmin} The equilateral triangle is a strict local minimum for the gap function on the moduli space of triangles.
\end{theorem}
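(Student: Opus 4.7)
The main obstacle in proving that the equilateral triangle $T$ is a strict local minimum is that its second Dirichlet eigenvalue has multiplicity two, so $\lambda_2$ is not smooth at $T$. The plan is to consider an arbitrary linear deformation $T(t)$ with third vertex displaced by $t(a,b)$, $(a,b)\in\mathbb{S}^1$, split the second eigenvalue into its two smooth branches $\lambda_2^-(t)\le \lambda_2^+(t)$ (the second and third Dirichlet eigenvalues of $T(t)$), and show that each $\xi^\pm(t):=d(T(t))^2(\lambda_2^\pm(t)-\lambda_1(t))$ strictly exceeds $\xi^\pm(0)=\xi(T)$ for all small $t>0$. Since $\xi(T(t))=\xi^-(t)$, this yields the theorem, and by smoothness of each branch it suffices to verify $\dot{\xi}^\pm(0^+)>0$ for every $(a,b)\in\mathbb{S}^1$.

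For the first-order data, \eqref{eq:l1-var} gives $\dot{\lambda}_1=-\int_T\phi_1(L_1|_{t=0})\phi_1$, and standard perturbation theory for degenerate eigenvalues identifies $\dot{\lambda}_2^\pm$ with the eigenvalues of the $2\times 2$ matrix
\[
M_{ij}=-\int_T\phi_i(L_1|_{t=0})\phi_j,\qquad i,j\in\{2,3\},
\]
for any $L^2$-orthonormal basis $\{\phi_2,\phi_3\}$ of the second eigenspace. Using integration by parts together with the $S_3$ symmetry of the equilateral triangle---reflection across $x=1/2$ forces $\int_T\phi_{1,x}\phi_{1,y}=0$, and rotational symmetry combined with $\int_T|\nabla\phi_1|^2=\lambda_1$ gives $\int_T\phi_{1,x}^2=\int_T\phi_{1,y}^2=\lambda_1/2$---one computes $\dot{\lambda}_1=-\tfrac{2\sqrt{3}}{3}b\lambda_1$. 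The analogous argument applied to the $S_3$-invariant spectral projector $\phi_2^2+\phi_3^2$ yields $\tr(M)=-\tfrac{4\sqrt{3}}{3}b\lambda_2$. The three squared side lengths of $T(t)$ are $1$, $1+t(a+\sqrt{3}b)+t^2$, and $1+t(-a+\sqrt{3}b)+t^2$, so the one-sided diameter derivative is $(d^2)'(0^+)=\max\{0,a+\sqrt{3}b,-a+\sqrt{3}b\}$.

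Combining these,
\[
\dot{\xi}^\pm(0^+) = (\lambda_2-\lambda_1)\Big[(d^2)'(0^+) - \tfrac{2\sqrt{3}}{3}b\Big] \pm \delta,
\]
where $\delta\ge 0$ is half the spread of the eigenvalues of $M$. A short case analysis on the signs of $a\pm\sqrt{3}b$ shows the bracketed factor is strictly positive and bounded away from zero on $\mathbb{S}^1$, so $\dot{\xi}^+(0^+)>0$ unconditionally. The main remaining work is the lower branch: it remains to prove
\[
\delta(a,b) < (\lambda_2-\lambda_1)\Big[(d^2)'(0^+) - \tfrac{2\sqrt{3}}{3}b\Big] \quad \text{for every } (a,b)\in\mathbb{S}^1.
\]
This is done by decomposing $M=aM^{(a)}+bM^{(b)}$ via the $S_3$-equivariance of the map $(a,b)\mapsto M$ and computing the entries of $M^{(a)}, M^{(b)}$ in a basis of the second eigenspace adapted to the reflection across $x=1/2$; the classical Lam\'e formulas for the equilateral eigenfunctions give closed forms for these entries and hence for $\delta(a,b)$. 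Should some direction force $\dot{\xi}^-(0^+)=0$ due to residual symmetry, the argument is completed by second-order analysis using $L_2$ from \eqref{eq:l2} and the $O(t^2)$ expansion of $d^2$. This explicit perturbation calculation---verifying the strict inequality for every $(a,b)$---is the main technical obstacle.
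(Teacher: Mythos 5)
Your reduction is essentially the same as the paper's, just phrased in perturbation-theory language rather than via the variational principle. The paper restricts to diameter-preserving deformations (justified by scale invariance), so $(d^2)'(0^+)=0$ and the condition you write becomes $\delta(a,b) < -\tfrac{2\sqrt 3}{3}b(\lambda_2-\lambda_1)$; the paper's Proposition 4 states exactly this (in the form $-\int_T\phi_2(L_1|_{t=0})\phi_2 + \int_T\phi_1(L_1|_{t=0})\phi_1 > 0$ for every unit-norm second eigenfunction $\phi_2$), since the left side ranges over $[\mu_-(M),\mu_+(M)]$ as $\phi_2$ varies, and $\dot\lambda_1 = -\tfrac{2\sqrt 3}{3}b\lambda_1$. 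Your symmetry-based computations of $\dot\lambda_1$ and $\mathrm{tr}(M)$ agree with the paper's direct calculations, and the bracket $[(d^2)'(0^+) - \tfrac{2\sqrt 3}{3}b]$ being strictly positive on $\mathbb{S}^1$ is correct.

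The genuine gap is that you identify the decisive inequality $\delta(a,b) < (\lambda_2-\lambda_1)[(d^2)'(0^+) - \tfrac{2\sqrt 3}{3}b]$ and then stop, calling it ``the main technical obstacle''---but this \emph{is} the proof; the reduction you carried out is the easy part. What is needed is to write $\phi_2 = \alpha u + \beta v$ in the Lam\'e basis, use the explicit quantities $\int u_x u_y$, $\int u_y^2$, $\int v_x v_y$, $\int v_y^2$, $\int u_y v_y$, $\int u_x v_x$ (which are what determine the entries of $M$), and minimize the resulting expression over $(\alpha,\beta)\in\mathbb{S}^1$ and over the allowed directions $(a,b)$. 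The paper does this by introducing polar coordinates on both circles and finds the minimum is strictly positive (approximately $2.64$). Your hedge about ``residual symmetry'' forcing $\dot\xi^-(0^+)=0$ and requiring second-order analysis does not arise: the first-order inequality is strict uniformly in $(a,b)$, which is precisely what the explicit computation establishes and what you have not supplied. Without that computation the argument is not a proof.
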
   

We will prove the theorem by applying the following proposition together with explicit calculations for the eigenvalues and eigenfunctions of the equilateral triangle.  

\begin{proposition} \label{prop:lmin} For any deformation of the equilateral triangle which preserves diameter, for the corresponding $L_1$, 
$$ - \left( \int_T \phi_2 (L_1|_{t=0}) \phi_2  - \int_T \phi_1 (L_1|_{t=0})  \phi_1 \right) > 0, $$
for all eigenfunctions $\phi_i$ for $\lambda_i (T)$, $i=1,2$, with unit $\cL^2$ norm.
\end{proposition}

\textbf{Proof that Proposition \ref{prop:lmin} implies Theorem \ref{th:lmin}:}  
Let $\phi_1$ and $\phi_2$ be eigenfunctions for the first two Dirichlet eigenvalues $\lambda_1$ and $\lambda_2$, respectively, for the equilateral triangle $T$.  Assume the eigenfunctions have unit $\cL^2$ norm on $T$.   Let $f_1$ and $f_2$ be eigenfunctions for the first two Dirichlet eigenvalues of $T(t)$.  Let 
$$\epsilon = \epsilon(t) := \frac{- \int \phi_1 f_2  }{\int \phi_1 f_1},$$
where above and indeed throughout this paper, integration is over the equilateral triangle $T$ \em unless otherwise indicated.  \em   For simplicity, in this section we shall use $L_1$ to denote $L_1|_{t=0}$.  Since $A$ is a linear transformation from $T$ to $T(t),$ $f_1$ and $f_2$ are orthogonal with respect to $dx dy$, so by the convergence of $f_1 \to \phi_1$, 
\begin{equation} \label{eq:leeps} \epsilon = \frac{\int f_2  (f_1  - \phi_1)}{\int \phi_1 f_1 } = O(t) \textrm{ as } t \to 0.\end{equation}
Note that 
$$\int (f_2 + \epsilon f_1 )\phi_1 = 0,$$
so by the variational principle, 
\begin{equation} \label{eq:lel2} \lambda_2 \leq \frac{- \int (f_2  + \epsilon f_1 ) \Delta_0 (f_2 + \epsilon f_1) }{\int |f_2  + \epsilon f_1| ^2}. \end{equation} 
Since these functions are uniformly bounded in $\cC^k$ for any fixed $k$, the Laplace operator on $T(t)$ 
$$\Delta = \Delta_0 + tL_1 + O(t^2).$$
Then, 
$$-\int (f_2 + \epsilon f_1) \Delta_0 (f_2 + \epsilon f_1) = -\int (f_2 + \epsilon f_1)(\Delta -  tL_1)(f_2 + \epsilon f_1) + O(t^2)$$
$$= \lambda_2 (t) + t \int f_2 L_1 f_2  + O(t^2).$$
Consequently, 
$$\lambda_2 \leq \lambda_2(t) + t \int  f_2 L_1 f_2 + O(t^2).$$
Since $\lambda_1$ is differentiable with $\dot{\lambda}_1 = -\int \phi_1  L_1 \phi_1$, 
\begin{equation} \label{eq:lel1} \lambda_1 (t) = \lambda_1 + t \dot{\lambda}_1 + O(t^2) \implies \lambda_1 = \lambda_1 (t) + t \int \phi_1 L_1 \phi_1 + O(t^2). \end{equation} 
Then, (\ref{eq:lel2}) and (\ref{eq:lel1}) imply 
\begin{equation} \label{eq:gapmin1} \lambda_2 - \lambda_1 \leq \left[ \lambda_2 (t) - \lambda_1 (t) \right]  + t\left[\int f_2 L_1 f_2 - \int \phi_1 L_1  \phi_1 \right] +  O(t^2).\end{equation}
Since the deformation preserves diameter, we may re-write (\ref{eq:gapmin1}) as 
\begin{equation} \label{eq:genmin} \xi(T) \leq \xi(T(t))  +  t\left[\int f_2 L_1 f_2 - \int \phi_1  L_1  \phi_1 \right] +  O(t^2). \end{equation}  
We can always construct a sequence of eigenfunctions $f_2$ which converge in $\cC^2$ to some eigenfunction $\phi_2$ for $\lambda_2$ with $\int \phi_2 ^2 = 1$.  Consequently, 
$$\xi(T) \leq \xi(T(t)) + t \left[ \int \phi_2 L_1 \phi_2 - \int \phi_1 L_1 \phi_1 \right] + O(t^2).$$
Since for all $\phi_2$, 
$$\int \phi_2 L_1 \phi_2 - \int \phi_1 L_1 \phi_1 < 0,$$
we have $\xi(T) < \xi(T(t))$ for all $t$ sufficiently small.  Finally, we note that we need only consider deformations in directions which preserve the diameter because the gap function is scale invariant.  %The diameter of any deformation triangle is at least one, because the first two vertices of the triangle are $(0,0)$ and $(1,0)$.  For a deformation which increases the diameter, we may rotate and/or scale the triangle to unit diameter without changing the gap function.  
We have therefore reduced the theorem to verifying explicit calculations involving the 
eigenfunctions and eigenvalues of the equilateral triangle.  

\subsection{Eigenfunctions and eigenvalues of the equilateral triangle}  
In 1852, Lam\'e computed the eigenfunctions and eigenvalues of the equilateral triangle by real analytically extending the eigenfunctions to the plane using the symmetry of the equilateral triangle \cite{lame}, \cite{lame1}, \cite{lame2}.  The eigenvalues are given by the general formula 
\begin{equation}\label{eq:eqtriev} \lambda = \frac{16 \pi^2}{27} (m^2 + n^2 - mn), \quad m, n \in \Z, \end{equation} such that $m$ and $n$ satisfy
\begin{equation} \label{eq:eqtrimn} m+n \equiv 0 \textrm{ mod } 3, \quad m \neq 2n, \quad n \neq 2m, \quad  m \neq -n. \end{equation} 
The eigenfunctions are 
$$\sum_{(m,n)} \pm \cos \left( \frac{2\pi }{3} \right) \left( nx + \frac{(2n-m)y}{\sqrt{3}} \right) \quad \textrm{and} \quad \sum_{(m,n)} \pm \sin \left( \frac{2\pi }{3} \right) \left( nx + \frac{(2n-m)y}{\sqrt{3}} \right),$$
where the sum alternates over the orbit 
$$(-n, m-n) \mapsto (-n, -m) \mapsto (n-m, -m) \mapsto (n-m, n) \mapsto (m, n) \mapsto (m, m-n),$$
such that $(m,n)$ satisfy (\ref{eq:eqtriev}, \ref{eq:eqtrimn}) for the eigenvalue
$$\lambda = \frac{16 \pi^2}{27} (m^2 + n^2 - mn).$$

\subsubsection{The first eigenspace of the equilateral triangle}
The first Dirichlet eigenvalue of the equilateral triangle with side lengths one is given by (\ref{eq:eqtriev}) with $m=0$ and $n=3$, (or with $m=n=3$), 
\begin{equation} \label{eq:lame} \lambda_1 = \frac{16 \pi^2}{3}. \end{equation}
Since the first Dirichlet eigenvalue is always simple, it follows from Corollary 2 of \cite{pin} that  the first $\cL^2$ normalized eigenfunction of the equilateral triangle $T$ is 
$$\phi_1 (x,y) = \frac{2 \sqrt{2}}{3^{3/4}}\left(  \sin\left( \frac{4\pi y}{\sqrt{3}} \right) - \sin \left( 2 \pi \left( x + \frac{\sqrt{3}y}{3} \right) \right) + \sin \left( 2 \pi \left( x - \frac{\sqrt{3} y}{3} \right) \right) \right).$$
\begin{proposition} 
$$\phi_1 (x,y) =  \frac{2 \sqrt{2}}{3^{3/4}}  \sin \left( \frac{2 \pi \sqrt{3}y}{3} \right) \sin \left( \pi \left(x + \frac{\sqrt{3} y}{3} \right) \right) \sin \left( \pi \left( x - \frac{\sqrt{3}y}{3} \right) \right).$$
\end{proposition}
\begin{proof}
The standard angle addition and subtraction identities for sine and cosine show that, 
$$ \sin \left( \frac{4 \pi \sqrt{3} y}{3} \right) - 2 \cos 2 \pi x \sin \left( 2\pi \frac{\sqrt{3}y}{3} \right)$$
$$= 2 \sin \left( 2 \pi \frac{\sqrt{3}y}{3} \right) \left( \cos \frac{2 \pi \sqrt{3}y}{3} - \cos 2 \pi x \right) $$
$$= \sin \left( \frac{2 \pi \sqrt{3}y}{3} \right) \sin \left( \pi \left(x + \frac{\sqrt{3} y}{3} \right) \right) \sin \left( \pi \left( x - \frac{\sqrt{3}y}{3} \right) \right).$$
The last equality follows from the identity
$$\cos \alpha - \cos \beta = - 2 \sin\left( \frac{1}{2} (\alpha + \beta) \right) \sin \left( \frac{1}{2} (\alpha - \beta) \right).$$
\end{proof}
We compute
$$\int (\phi_1)_y^2 = \int (\phi_1)_x ^2 = \frac{8 \pi^2}{3}, \quad \int (\phi_1)_x (\phi_1)_y = 0.$$ 
$$\int (\phi_1)_{xy}^2 = \frac{32 \pi^4}{9}, \quad \int (\phi_1)_{yy}^2 = \int (\phi_1)_{xx} ^2 = \frac{32 \pi^4}{3}.$$
$$\int \phi_1 = \frac{3^{3/4}}{\sqrt{2} \pi}.$$
$$\int (\phi_1)_{xx} (\phi_1)_{xy} = 0 = \int (\phi_1)_{yy} (\phi_1)_{xy}.$$

\subsubsection{The second eigenspace of the equilateral triangle}
The second Dirichlet eigenvalue of the equilateral triangle is given by (\ref{eq:eqtriev}) with $m=1$ and $n=5$ (or with $m=-1$ and $n=4$), 
 $$\lambda_2 = \frac{112 \pi^2}{9}.$$
This eigenspace has dimension two.  An $\cL^2$ orthonormal basis of eigenfunctions is given by 
$$u(x,y) = \frac{2}{3^{\frac{3}{4}}} \left( \begin{array}{l} \cos \frac{2\pi}{3}(5 x - \sqrt{3} y) - \cos \frac{2\pi}{3}(5 x + \sqrt{3} y)+ \cos \frac{2\pi}{3}(- x +3 \sqrt{3} y) \\
- \cos \frac{2\pi}{3}(- x -3 \sqrt{3} y)+ \cos \frac{2\pi}{3}(- 4x -2 \sqrt{3} y)- \cos \frac{2\pi}{3}(- 4x +2 \sqrt{3} y) \end{array} \right),$$
and
$$v(x,y) = \frac{2}{3^{\frac{3}{4}}} \left( \begin{array}{l} \sin \frac{2\pi}{3}(5 x - \sqrt{3} y) - \sin \frac{2\pi}{3}(5 x + \sqrt{3} y)+ \sin \frac{2\pi}{3}(- x +3 \sqrt{3} y) \\
 - \sin \frac{2\pi}{3}(- x -3 \sqrt{3} y)+ \sin \frac{2\pi}{3}(- 4x -2 \sqrt{3} y)- \sin \frac{2\pi}{3}(- 4x +2 \sqrt{3} y) \end{array} \right).$$

The following calculations will play a key role in the proof of Theorem 2.
$$\int |u_x|^2 = \int |v_y|^2 =  -\frac{6561}{800} + \frac{56 \pi^2}{9}, \quad \int u_x u_y = \frac{-6561 \sqrt{3}}{800}.$$
$$ \int v_x v_y = \int u_y v_y = \frac{6561 \sqrt{3}}{800}, \quad \int u_x v_y = \frac{6561}{800}.$$ 
$$\int u_x v_x = - \frac{6561\sqrt{3}}{800}, \quad \int |u_y|^2 = \int |v_x|^2 = \frac{6561}{800} + \frac{56 \pi^2}{9}.$$
$$\int u_{xy} ^2 = - \frac{5103 \pi^2}{200} + \frac{1568 \pi^4}{81}, \quad  \int u_{yy} ^2 = \frac{5103 \pi^2}{40} + \frac{1568 \pi^4}{27}.$$
$$\int u_{xx} ^2 = \frac{7 \pi^2(-59049 + 44800 \pi^2)}{5400}, \quad  \int v_{xx} ^2 = \frac{7 \pi^2(59049 + 44800 \pi^2)}{5400}.$$
$$\int v_{xy}^2 = \frac{5103 \pi^2}{200} + \frac{1568 \pi^4}{81}, \quad  \int v_{yy} ^2 = -\frac{5103 \pi^2}{40} + \frac{1568 \pi^4}{27}.$$
$$\int v_{xy}u_{xy} = - \frac{5103 \sqrt{3} \pi^2}{200}, \quad \int v_{xx} u_{xx} = - \frac{15309 \sqrt{3} \pi^2}{200}.$$
$$\int v_{yy} u_{yy} = \frac{5103 \sqrt{3} \pi^2}{40}.$$

%Any eigenfunction $\varphi$ for $\lambda_2$ with unit $\cL^2$ norm is a linear combination of $u$ and $v$,
%$$\varphi = \alpha u + \beta v, \qquad \alpha^2 + \beta^2 = 1.$$
%We compute that 
%$$\int \varphi_y ^2 = \frac{56 \pi^2}{9} + \frac{6561}{800} \left[ \alpha^2 - \beta^2 + 2 \sqrt{3} \alpha \beta \right].$$
%This is minimized by
%$$\varphi = -\frac{1}{2} u + \frac{\sqrt{3}}{2} v, \qquad \int \varphi_y ^2 = \frac{56 \pi^2}{9} - \frac{6561}{400}.$$

\subsubsection{The third and higher eigenspaces of the equilateral triangle}
The third (distinct) eigenvalue is given by (\ref{eq:eqtriev}) with $m=n=6$, 
$$\lambda_3 = \frac{64 \pi^2}{3}.$$
The eigenspace has dimension one.   The eigenfunction is
$$A(x,y) = \frac{2}{3^{\frac{3}{4}}} \left( 2 \sin \frac{2\pi}{3}(6 x +4 \sqrt{3} y) -2 \sin \frac{2\pi}{3}(6x +2 \sqrt{3} y) - 2 \sin \frac{2\pi}{3}(2\sqrt{3}\pi y) \right).$$
%For $(m,n) = (7,2)$ we obtain the fourth (distinct) eigenvalue, 
%$$\lambda_4 = \frac{208 \pi^2}{9}.$$

\subsection{Proof of Proposition \ref{prop:lmin}}
Any real eigenfunction $\varphi$ of $\lambda_2$ is a linear combination, 
$$\varphi = \alpha u + \beta v, \qquad \alpha^2 + \beta^2 = 1,$$
and we may assume without loss of generality that $\alpha \geq 0.$  We compute 
$$\int \varphi_y^2 =  \frac{56 \pi^2}{9} + \left[\alpha^2 - \beta^2 + 2 \sqrt{3} \alpha \beta \right] \frac{6561}{800},$$
and
$$\int \varphi_x \varphi_y = \left[ \beta^2 - \alpha^2 + \frac{2 \alpha \beta}{\sqrt{3}} \right] \frac{6561 \sqrt{3}}{800}.$$
As previously observed, we need only consider those deformations in directions which preserve diameter, and by symmetry, we need only consider those directions $\theta$ with $\cos \theta \geq 0$.  These are deformations in directions $\theta \in [-\pi/2, -\pi/6]$, so the direction vector $(a,b)$ satisfies $a^2 + b^2 = 1$ , $a \geq 0$, and $a + \sqrt{3}b \leq 0$.  We compute the minimum of 
$$I := - \int \varphi ( L_1 |_{t=0})  \varphi + \int \phi_1 ( L_1|_{t=0})   \phi_1 =   \left( \frac{2 b \lambda_1 }{\sqrt{3}}  - \frac{4 b }{\sqrt{3}} \int |\varphi_y|^2 - \frac{4 a }{\sqrt{3}} \int   \varphi_x \varphi_y \right) = $$
$$ -\left( \frac{25600\pi^2 + (\alpha^2 - \beta^2 + 2\sqrt{3} \alpha \beta)59049}{1800\sqrt{3}} \right) b - \frac{6561}{200} \left( \beta^2 - \alpha^2 + \frac{2\alpha\beta}{\sqrt{3}}\right) a.  $$
Since $(a, b)$ is a unit vector, $a \geq 0$, and $b \leq -\frac{a}{\sqrt{3}}$, it follows that $b = -\sqrt{1 - a^2}.$  So, we determine the minimum of 
$$I = \frac{ \left(25600\pi^2 + (\alpha^2 - \beta^2 + 2 \sqrt{3} \alpha \beta)59049 \right) \sqrt{1-a^2} + 59049 \sqrt{3} (\alpha^2 - \beta^2 - \frac{2\alpha\beta}{\sqrt{3}} )a}{1800\sqrt{3}},$$
subject to the constraints
$$\alpha^2 + \beta^2 = 1, \quad  0 \leq a \leq \frac{\sqrt{3}}{2}.$$ 
Introducing the polar coordinates,
$$\cos(t) := \alpha, \quad \sin(t) := \beta,$$
we compute that $I$ is minimized for 
$$t = \frac{\pi}{2}, \quad a = \frac{\sqrt{3}}{2},$$
and the minimum is
$$\frac{25600 \pi^2 - 236 196}{3600 \sqrt{3}} > 2.64>0.$$
\qed 

\begin{remark}  In the proof of the theorem, we have shown that for any triangle $T \in \mathfrak{M}$ with vertices $(0,0)$, $(1,0)$, and $(x,y)$ where $(x - 1/2)^2 + (y-\sqrt{3}/2)^2 \leq t^2$, 
$$\lambda_2(T) - \lambda_1(T) \geq \frac{64 \pi^2}{9} + t (2.64) + O(t^2).$$ 
In the arguments below, we use the calculations in \S 2 to precisely estimate the error $O(t^2)$.  
\end{remark}  
%%%%%%%%%%%%%%%%%%%%%%%%%%%%%%%%%%%%
\section{Theorem \ref{th:emin} is true for \em almost equilateral triangles \em}
The following proposition is the last step we need to reduce the proof of Theorem 2 to finitely many numerical calculations.
\begin{proposition} \label{pr:nequi}
Let $T$ be any triangle with vertices $(0, 0)$, $(1, 0)$ and $(x,y)$ such that 
$$x^2 + y^2 \leq 1, \quad (x-1)^2 +y^2 \leq 1,$$
and 
$$t:= \sqrt{ \left(x-\frac{1}{2} \right)^2 + \left(y - \frac{\sqrt{3}}{2} \right)^2} \leq 0.0004.$$
Then, the gap function $\xi(T)$ satisfies
$$\xi(T) > \frac{64 \pi^2}{9}.$$
\end{proposition}

\textbf{Proof:  }  We shall again use $\lambda_i$, $i=1$, $2$ for the eigenvalues of the equilateral triangle and $\lambda_i (t)$, $i=1$, $2$, for the corresponding eigenvalues of a triangle $T(t)$ which satisfies the hypotheses of the proposition.  

Since $\lambda_1$ is differentiable, we have 
$$\lambda_1 (t) = \lambda_1 + t \dot{\lambda}_1 + O_1 (t^2).$$
By the variational principle, 
$$\lambda_1 (t) \leq \frac{- \int_{T(t)} \phi_1 \Delta \phi_1}{\int_{T(t)} \phi_1 ^2},$$
which simplifies to 
$$\lambda_1 (t) \leq  - \int \phi_1 \Delta \phi_1,$$
since integration over $T$ and $T(t)$ differ by linear factors which cancel in the numerator and denominator, and $\int \phi_1 ^2 = 1$.  We then have
$$\lambda_1 (t) \leq -\int \phi_1 (\Delta_0 + tL) \phi_1,$$
where $L = L_1 + L_2$ is defined in (\ref{eq:l1}, \ref{eq:l2}).  So, we compute directly 
$$\lambda_1 (t) \leq \lambda_1 - t \int \phi_1 \left( L_1 |_{t=0} \right) \phi_1 - t \int \phi_1 \left( L - L_1|_{t=0} \right) \phi_1.$$
Thus, we have made explicit 
$$|O_1 (t^2)| \leq  \left| t \int \phi_1 \left( L - L_1|_{t=0} \right) \phi_1 \right|.$$
We have
$$L - L_1|_{t=0} = \frac{4ta^2 \pa_x ^2 - 4tb^2 \pa_y^2}{(\sqrt{3} + 2tb)^2} + \left( \frac{-4\sqrt{3}}{(\sqrt{3} + 2tb)^2} + \frac{4}{\sqrt{3}} \right) \left(a \pa_x \pa_y + b \pa_y ^2 \right).$$

We estimate using the calculations for the first eigenfunction of the equilateral triangle and $a^2 + b^2 =1$, 
$$\left| O_1(t^2) \right| \leq t \left( \left| \frac{4 \sqrt{3}}{(\sqrt{3} + 2tb)^2} - \frac{4}{\sqrt{3}} \right| |b| \int \left( \phi_1 \right)_y ^2 \right) + t^2  \frac{4(a^2 - b^2)}{(\sqrt{3} + 2tb)^2} \int \left( \phi_1 \right)_y ^2 $$
$$\leq t^2 \left( \frac{16 \sqrt{3} + 16t }{\sqrt{3} (\sqrt{3} + 2tb)^2} \frac{8 \pi^2}{3} + \frac{4}{( \sqrt{3} + 2tb)^2} \frac{8 \pi^2}{3} \right),$$
which for $t \leq 0.0004$ gives 
\begin{equation}\label{eq:o1t} |O_1 (t^2)|  \leq t^2 (175.95). \end{equation} 

\subsection{Estimates for the second eigenspace}
Since $\lambda_2$ of the equilateral triangle is not differentiable, the estimates for its error term require a bit more work.  The main idea is to expand the first two eigenfunctions for the linearly-deformed  triangle using the orthonormal basis of eigenfunctions for the equilateral triangle.  We then use the Poincar\'e inequality and our explicit calculations for the eigenfunctions of the equilateral triangle to estimate the error.  

\subsubsection{The first eigenfunction of the linearly deformed triangle}
Our eventual goal is to estimate $\lambda_2 (t)$ from below.   To accomplish this, we require not only estimates for the second eigenspace of the linearly deformed triangle, $T(t)$, but also estimates for its first eigenfunction.  Let $f$ be the first eigenfunction of $T(t)$ and write 
$$f = \phi_1 + tg, \quad \alpha_i := \frac{\lambda_i (t) - \lambda_i}{t} \textrm{ for } i=1,2,$$
with 
\begin{equation} \label{eq:gortho} \int \phi_1 ^2 = 1, \quad \int \phi_1 g = 0.  \end{equation} 
As usual, integration is over the equilateral triangle $T$ with respect to the standard measure $dx dy$, and we use $|| \cdot ||$ to denote the $\cL^2$ norm over $T$.  Since we assume $t \leq 0.0004$, by (\ref{eq:alphai}) 
\begin{equation} \label{eq:alpha2}  | \lambda_i (t) - \lambda_i | = t |\alpha_i| \leq t (2.32) \lambda_i. \end{equation} 

We compute 
\begin{equation} \label{eq:flf1} \int f L f = \int \phi_1 L \phi_1 + 2t \int g L \phi_1 + t^2 \int g L g.  \end{equation}
Since $\int \phi_1 g = 0$, by the variational principle (\ref{varprin}) 
$$\lambda_2 \leq \frac{ \int |\n g|^2}{\int g^2},$$
which gives the Poincar\'e inequality for $g$,
\begin{equation} \label{eq:gest0} || g || \leq \frac{1}{\sqrt{\lambda_2}} || \n g ||. \end{equation}
To  estimate $|| \n g||$, we use the definition of $f$ and $g$ to compute
\begin{equation} \label{eq:1estg} (\Delta_0 + \lambda_1)g = - \alpha_1 f - L\phi_1 - t L g. \end{equation}
%%= (\Delta + \lambda - tL - t \alpha_1) \frac{ f - \phi_1}{t} = - \alpha_1 f - L f =%%
By definition of $g$ and (\ref{eq:gortho}), 
\begin{equation} \label{eq:2estg} \int fg = \int (\phi_1 + tg)g = t \int g^2. \end{equation} 
We compute using integration by parts and then substituting (\ref{eq:1estg}, \ref{eq:2estg})
$$\int g(\Delta_0 + \lambda_1)g = - \int |\n g|^2 + \lambda_1 \int g^2 = - t \alpha_1 \int g^2 - \int g L \phi_1 - t \int g L g,$$
which gives
\begin{equation} \label{eq:ng} ||\n g||^2 = (\lambda_1 + \alpha_1 t) \int g^2 + \int g L \phi_1 + t \int g L g. \end{equation} 
By definition of $L$, and since $a^2 + b^2 = 1$, for any function $\psi$ which vanishes on $\pa T$, we have
$$\left| \int \psi L \psi \right| \leq \frac{4 \sqrt{3}}{(\sqrt{3} - 2t|b|)^2} ||\psi_x|| ||\psi_y|| + \frac{4|b| \sqrt{3} + 4tb^2}{(\sqrt{3} - 2t|b|)^2} ||\psi_y||^2,$$
and since we always have $||\psi_x|| \leq || \n \psi||$,  $|| \psi_y || \leq || \n \psi||$, and $-1 \leq b \leq 1$, we have 
\begin{equation} \label{eq:lcoeff} \left| \int \psi L \psi \right| \leq \frac{8 \sqrt{3} + 4t}{(\sqrt{3} - 2t)^2} || \n \psi ||^2 \quad \textrm{ for any } \psi \textrm{ which vanishes on } \pa T. \end{equation} 
Applying this to $g$, we have 
$$\left| \int g L g \right| \leq \frac{8 \sqrt{3} + 4t}{(\sqrt{3} - 2t)^2} ||\n g||^2.$$
By the Poincar\'e inequality for $g$ (\ref{eq:gest0}), (\ref{eq:ng}) with the Cauchy inequality, and the above estimate, we have 
$$||\n g||^2 \leq \frac{\lambda_1 + |\alpha_1| t}{\lambda_2} || \n g||^2 + \frac{||\n g||}{\sqrt{\lambda_2}} ||L \phi_1|| + t \frac{8 \sqrt{3} + 4t}{(\sqrt{3} - 2t)^2} ||\n g||^2,$$
which gives 
$$||\n g|| \leq \left( \frac{\lambda_2 - \lambda_1 - |\alpha_1| t}{\lambda_2} - t \left( \frac{8 \sqrt{3} + 4t}{(\sqrt{3} - 2t)^2} \right) \right)^{-1} \frac{|| L \phi_1 ||}{\sqrt{\lambda_2}}.$$
We assume $t \leq 0.0004$, so substituting the estimate (\ref{eq:alpha2}) for $\alpha_1$, we have
$$|| \n g|| \leq \left( \frac{\lambda_2 - \lambda_1 -0.00232 \lambda_1}{\lambda_2} - 0.0046309\right)^{-1} \frac{|| L \phi_1||}{\sqrt{\lambda_2}}.$$ 
Using the values for $\lambda_1$ and $\lambda_2$, we have 
\begin{equation} \label{eq:gest2} || \n g || \leq 0.15948 || L \phi_1||.  \end{equation} 

Next we estimate $|| L \phi_1||$.  For these calculations it is convenient to drop the subscript and write simply $\phi$.  We have 
$$L \phi = \frac{4 t a^2}{(\sqrt{3} + 2tb)^2} \phi_{xx} - \frac{4\sqrt{3}a}{(\sqrt{3} + 2tb)^2} \phi_{xy} - \frac{4\sqrt{3}b + 4tb^2}{(\sqrt{3} + 2tb)^2} \phi_{yy}.$$
Recalling 
$$\int \phi_{xx} \phi_{xy} = \int \phi_{yy} \phi_{xy} = \int \phi_{xx} \phi_{yy} =  0,$$
and $a^2 + b^2 = 1$, we compute 
$$\int (L \phi)^2 \leq \frac{16 t^2 }{(\sqrt{3} - 2t|b|)^4} || \phi_{xx} ||^2 + \frac{48}{(\sqrt{3} - 2t |b|)^4} ||\phi_{xy}||^2 + \frac{48 + 32 \sqrt{3} t + 16t^2}{(\sqrt{3} - 2t|b|)^4} || \phi_{yy}||^2.$$
Assuming $t \leq 0.0004$ and substituting the value of the integrals, we have
$$||L \phi||^2 \leq (1.7861) t^2 \frac{32 \pi^4}{3} + 5.3581 \frac{32 \pi^4}{9} + 5.3581 \frac{32 \pi^4}{3} + 6.1870 t \frac{32 \pi^4}{3} + 1.7861 t^2 \frac{32 \pi^4}{3}.$$
%%\int_T (L \phi)^2 = (\sqrt{3} + 2tb)^{-4} \left(16 t^2 a^2 \int_T \phi_{xx}^2 + 48a^2 \int_T \phi_{xy} ^2 + (4 \sqrt{3} b + 4b^2)^2 \int_T \phi_{yy} ^2 - 8 t a^2 (4 \sqrt{3} b + 4 b^2) \int_T \phi_{xy} ^2
Assuming $t \leq 0.0004$, we have the estimate for $||L \phi||$, 
\begin{equation} \label{eq:Lphi1} || L \phi|| \leq 86.194 \end{equation} 
and this gives us the approximation for $|| \n g||$, 
$$ || \n g || \leq 0.15948 || L \phi || \leq 13.747.$$
Moreover, we have the estimate for $||g||$,
\begin{equation} \label{eq:g1} ||g|| \leq \frac{||\n g||}{\sqrt{\lambda_2}} \leq \frac{(11.907)(3)}{\sqrt{112} \pi} \leq 1.2404. \end{equation} 
%%%%%%%%%

\subsubsection{The second eigenspace}  
 Let $F$ be an eigenfunction in the second eigenspace of $T(t)$, and assume $||F||=1$, where as usual, the $\cL^2$ norm is taken over the equilateral triangle $T$. Expanding $F$ in terms of the eigenfunctions of the equilateral triangle, 
 $$F = \varphi + A \phi_1 + t G,$$
 where $\varphi$ is an eigenfunction for $\lambda_2$, and $G$ satisfies
 $$\int G \phi_1 = 0, \quad \int G \phi_2 = 0 \quad \forall \textrm{ eigenfunction $\phi_2$ for $\lambda_2$.}$$
 Then, we have 
 \begin{equation} \label{eq:intGFA} \int GF = t \int G^2 \quad \textrm{and} \quad \int F \phi_1 = A = \int F(f-tg) = - t \int Fg, \end{equation} 
 so by the Cauchy inequality
 $$A^2 \leq t^2 ||g||^2 ||F||^2.$$
By definition of $F$ and $G$, 
$$\int F^2 = 1+ A^2 + t^2 || G || ^2.$$
Combining these, we have 
$$A^2 \leq t^2 || g|| ^2 ||F ||^2 = t^2 ||g ||^2 (1 + A^2 + t^2 ||G||^2),$$
so we obtain the estimate for $A$,
\begin{equation} \label{eq:a-1} A^2 \leq \frac{t^2 ||g||^2 (1 + t^2 ||G||^2) }{1-t^2 ||g||^2} \implies |A| \leq \frac{t ||g|| (1+t ||G||)}{\sqrt{1-t^2 ||g||^2}}. \end{equation} 
%%%%Recall that t \geq 0 and thus \sqrt{1 + t^2 ||G||^2} \leq 1 + t ||G||.%%%%%%%
Since $G$ is orthogonal to the first two eigenspaces, the variational principle for $\lambda_3$ gives
$$\lambda_3 \leq \frac{\int | \n G|^2}{\int G^2},$$
which implies the Poincar\'e inequality for $G$, 
\begin{equation} \label{eq:pG} ||G||^2 \leq \frac{ ||\n G||^2}{\lambda_3}. \end{equation} 
We estimate $G$ in the same spirit as $g$.   We compute, 
$$(\Delta_0 + \lambda_2) G = (\Delta_0 + \lambda_2) \left( \frac{-A \phi_1 + F}{t} \right),$$
since
$$(\Delta_0 + \lambda_2) \varphi = 0.$$
So, 
$$(\Delta_0 + \lambda_2) G = \frac{1}{t} \left( \lambda_1 A \phi_1 - \lambda_2 A \phi_1 + (\Delta - tL + \lambda_2 (t) - t \alpha_2) F \right),$$
$$=  \frac{(\lambda_1 - \lambda_2)A \phi_1}{t} - (L+ \alpha_2)F.$$
To estimate $||\n G||$ and hence $||G||$ by the Poincar\'e inequality (\ref{eq:pG}), we use the above calculation together with integration by parts (as we did with $g$), 
$$\int G (\Delta_0 + \lambda_2) G =  \int \frac{G (\lambda_1 - \lambda_2)A \phi_1}{t} - \int G (L + \alpha_2) F$$
which since $G$ is orthogonal to $\phi_1$ becomes
$$= - \int G L F - \alpha_2 \int GF.$$
By definition of $F$ and (\ref{eq:intGFA}), this is 
$$ - \int GL(A \phi_1 + \varphi + t G) - t \alpha_2 \int G^2 = - A \int G L \phi_1 - \int G L \varphi - t \int G L G - t \alpha_2 \int G^2.$$
On the other hand, integrating by parts gives
$$\int G(\Delta_0 + \lambda_2) G = - \int |\n G|^2 + \lambda_2 \int G^2.$$
Combining this with the above calculation, we have
$$- \int |\n G|^2 + \lambda_2 \int G^2 = - A \int GL\phi_1 - \int GL \varphi - t \int GLG - t \alpha_2 \int G^2.$$
The estimate for $L$ (\ref{eq:lcoeff}) and the Cauchy inequality imply 
$$||\n G||^2 \leq \lambda_2 ||G||^2 +  ||G ||( |A| || L \phi_1 || + || L \varphi || )+ t \frac{8 \sqrt{3} + 4t}{(\sqrt{3} - 2t)^2}  ||\n G||^2 + t |\alpha_2| ||G||^2.$$
By the Poincar\'e inequality for $G$ (\ref{eq:pG}),
$$||\n G||^2 \leq \frac{\lambda_2}{\lambda_3} ||\n G||^2 +  \frac{||\n G ||}{\sqrt{\lambda_3}}( |A| || L \phi_1 || + || L \varphi || )+ t \frac{8 \sqrt{3} + 4t}{(\sqrt{3} - 2t)^2}  ||\n G||^2 + t \frac{|\alpha_2|}{\lambda_3}||\n G||^2.$$
This gives the estimate 
$$|| \n G || \leq \left( 1 -  \frac{\lambda_2 + t |\alpha_2|}{\lambda_3} - t \left( \frac{8 \sqrt{3} + 4t}{(\sqrt{3} - 2t)^2} \right) \right)^{-1} \left( \frac{ |A| ||L \phi_1 || + || L \varphi||}{\sqrt{\lambda_3}} \right),$$
and
$$|| G|| \leq \frac{1}{\sqrt{\lambda_3}} \left( 1 -  \frac{\lambda_2 + t |\alpha_2|}{\lambda_3} - t \left( \frac{8 \sqrt{3} + 4t}{(\sqrt{3} - 2t)^2} \right) \right)^{-1} \left( \frac{ |A| ||L \phi_1 || + || L \varphi||}{\sqrt{\lambda_3}} \right).$$
Expanding and simplifying we have
$$|| G|| \leq \frac{ (\sqrt{3} - 2t)^2( |A| ||L \phi_1|| + ||L \varphi||) }{(\sqrt{3} - 2t)^2(\lambda_3  - \lambda_2 - t |\alpha_2|) - t \lambda_3 (8 \sqrt{3} + 4t) }.$$
Recalling the estimate (\ref{eq:a-1}) for $A$, 
$$|| G|| \leq  \frac{(\sqrt{3} - 2t)^2}{ (\sqrt{3} - 2t)^2(\lambda_3  - \lambda_2 - t |\alpha_2|) - t \lambda_3 (8 \sqrt{3} + 4t)} \times $$
$$\left(\frac{ t ||L \phi_1|| ||g|| + t^2 ||L \phi_1|| ||g|| ||G||}{\sqrt{1-t^2 ||g||^2}} + ||L \varphi|| \right).$$ 
Collecting the $||G||$ terms,
$$||G || \left( 1 - \frac{ (\sqrt{3} - 2t)^2}{(\sqrt{3} - 2t)^2(\lambda_3  - \lambda_2 - t |\alpha_2|) - t \lambda_3 (8 \sqrt{3} + 4t)} \frac{ t^2 || L \phi_1 || ||g||}{\sqrt{1 - t^2 ||g||^2}} \right)$$
$$ \leq \frac{(\sqrt{3} - 2t)^2}{(\sqrt{3} - 2t)^2(\lambda_3  - \lambda_2 - t |\alpha_2|) - t \lambda_3 (8 \sqrt{3} + 4t) } \left( \frac{ ||L \phi_1 || t ||g || }{\sqrt{1 - t^2 ||g||^2}} + || L \varphi||  \right).$$
This gives the estimate from above for $||G||$ 
\begin{equation} \label{eq:Gestf} \frac{ (\sqrt{3} - 2t)^2  \left( t ||L \phi_1|| ||g|| + \sqrt{ 1- t^2 ||g||^2} ||L \varphi|| \right) }{\left( (\sqrt{3} - 2t)^2(\lambda_3  - \lambda_2 - t |\alpha_2|) - t \lambda_3 (8 \sqrt{3} + 4t) \right) \sqrt{1 - t^2 ||g||^2} - t^2 (\sqrt{3} - 2t)^2 || L \phi_1 || ||g|| }. \end{equation} 
At this point, we may substitute estimates for every term except $||L \varphi||$, which we now estimate.
%%%%%%%%%%%%%%%%%
By definition of $L$, 
$$L \varphi = \frac{4 t a^2}{(\sqrt{3} + 2tb)^2} \varphi_{xx} - \frac{4\sqrt{3}a}{(\sqrt{3} + 2tb)^2} \varphi_{xy} - \frac{4\sqrt{3}b + 4b^2}{(\sqrt{3} + 2tb)^2} \varphi_{yy}.$$
By the triangle inequality for $\cL^2$ and since $t \leq 0.0004$, 
\begin{equation} \label{eq:lvphi} || L \varphi || \leq 0.0013365 || \varphi_{xx} || + 2.3148 || \varphi_{xy}|| + 3.6512  || \varphi_{yy} ||. \end{equation} 
Since $\varphi$ is an $\cL^2$ orthonormal eigenfunction for $\lambda_2$, 
$$\varphi = \alpha u + \beta v, \quad \alpha^2 + \beta^2 = 1.$$
By our calculations for the second eigenspace of the equilateral triangle, 
$$|| \varphi_{xx} || = \sqrt{ \alpha^2 \frac{7 \pi^2(-59049 + 44800 \pi^2)}{5400} + \beta^2  \frac{7 \pi^2 (54049 + 44800 \pi^2)}{5400} - 2 \alpha \beta \frac{15309 \sqrt{3} \pi^2}{200} }.$$
$$|| \varphi_{xy} || = \sqrt{ \alpha^2 \left( -\frac{5103 \pi^2}{200} + \frac{1568 \pi^4}{81} \right) + \beta^2 \left( \frac{5103 \pi^2}{200} + \frac{1568 \pi^4}{81} \right) - 2\alpha \beta \frac{5103 \sqrt{3} \pi^2}{200}}.$$
$$|| \varphi_{yy} || = \sqrt{\alpha^2 \left( \frac{5103 \pi^2}{40} + \frac{1568 \pi^4}{27} \right) + \beta^2 \left( -\frac{5103 \pi^2}{40} + \frac{1568 \pi^4}{27} \right) + 2 \alpha \beta \frac{5103 \sqrt{3} \pi^2}{40}  }.$$
We introduce the polar coordinates 
$$\cos(t) := \alpha, \quad \sin(t) := \beta.$$
The expressions simplify a bit,
$$|| \varphi_{xx} || = \sqrt{ \frac{1568 \pi^4}{27} - \cos(2t) \frac{15309 \pi^2}{200} - \sin(2t) \frac{15309 \sqrt{3} \pi^2}{200} }.$$
$$|| \varphi_{xy} || \sqrt{ \frac{1568 \pi^4}{81} - \cos(2t) \frac{5103\pi^2}{200} -\sin(2t) \frac{5103 \sqrt{3} \pi^2}{200}}.$$
$$|| \varphi_{yy} || = \sqrt{ \frac{1568\pi^4}{27} + \cos(2t) \frac{5103 \pi^2}{40} +  \sin(2t) \frac{5103 \sqrt{3} \pi^2}{40}  }.$$
Using these calculations and (\ref{eq:lvphi}), we estimate $|| L \varphi ||$ by determining the maximum of 
$$0.0013365 \sqrt{ \frac{1568 \pi^4}{27} - \cos(2t) \frac{15309 \pi^2}{200} - \sin(2t) \frac{15309 \sqrt{3} \pi^2}{200} }$$
$$+ 2.3148    \sqrt{ \frac{1568 \pi^4}{81} - \cos(2t) \frac{5103\pi^2}{200} -\sin(2t) \frac{5103 \sqrt{3} \pi^2}{200}}$$
$$+ 3.6512  \sqrt{ \frac{1568\pi^4}{27} + \cos(2t) \frac{5103 \pi^2}{40} +  \sin(2t) \frac{5103 \sqrt{3} \pi^2}{40}  },$$
for $0 \leq t < 2 \pi$.  We compute that the maximum is achieved when $t = \pi/6$, with the approximate value $416.269$, and in particular
$$||L \varphi|| \leq 416.27.$$
Substituting the estimate for $||L \varphi||$, the values of $\lambda_i$ for $i=1,2,3$, the estimate (\ref{eq:Lphi1}) for $||L \phi_1||$, and the estimate  (\ref{eq:g1}) for $||g||$ into the estimate for $||G||$ (\ref{eq:Gestf}), we arrive at our numerical estimate for $||G||$
\begin{equation} \label{eq:G1} ||G || \leq 4.7772.\end{equation} 
We use this to estimate $|| \n G||$ using the Poincar\'e inequality
$$|| \n G || \leq \sqrt{\lambda_3} ||G|| \leq \sqrt{\frac{64 \pi^2}{3}} 4.7772 \leq 69.32.$$
Recalling the estimate for $A$ (\ref{eq:a-1}), we have 
\begin{equation} \label{eq:a00} |A| \leq t \left( \frac{||g|| + t ||g||||G||}{\sqrt{1 - t^2 ||g||^2}} \right) \leq t(1.243).  \end{equation}  
%%%%%%%%%%%%%%%%%%%
Based on these estimates, we shall use the variational principle to estimate $\lambda_2 (t)$ from below.  Since $F - A \phi_1 = \varphi + tG$ is orthogonal to $\phi_1$, the variational principle for $\lambda_2$ gives 
\begin{equation} \label{l2vp} \lambda_2 \leq \frac{- \int (F- A\phi_1) \Delta_0 (F - A \phi_1)}{|| F - A \phi_1||^2} =  \frac{- \int (F- A\phi_1) (\Delta - t L) (F - A \phi_1)}{1+t^2 ||G||^2}. \end{equation} 
We compute the numerator to be 
$$\lambda_2 (t) + t \int F L F + \int A \phi_1 (\Delta - tL) (F-A \phi_1) + \int F(\Delta - tL) A \phi_1.$$
The last two terms are 
$$A \int \phi_1 \Delta_0 (\varphi + t G) + A \int F \Delta_0 \phi_1.$$
By definition of $\phi_1$ and integration by parts, these are 
$$= -A \lambda_1 \int \phi_1 (\varphi + tG) - A \lambda_1 \int F \phi_1.$$
The first integral vanishes since $\varphi$ and $G$ are orthogonal to $\phi_1$.  So, the numerator of (\ref{l2vp}) is 
$$- \int (F- A\phi_1) \Delta_0 (F - A \phi_1) = \lambda_2 (t) + t \int F L F - A \lambda_1 \int F \phi_1.$$
Thus, the variational principle for $\lambda_2$ implies
$$\lambda_2 \leq \frac{  \lambda_2 (t) + t \int F L F - A \lambda_1 \int F \phi_1}{1+t^2 ||G||^2},$$
which gives the estimate for $\lambda_2 (t)$,
$$\lambda_2 (t) \geq  \lambda_2 (1 + t^2 ||G||^2) - t \int FLF + A \lambda_1 \int F \phi_1.$$
This implies 
$$\lambda_2 (t) \geq \lambda_2 - t \int FLF + A \lambda_1 \int F \phi_1.$$
Substituting (\ref{eq:intGFA}), we have
\begin{equation}\label{eq:lambda2t1} \lambda_2 (t) \geq \lambda_2 - t \int FLF + A^2 \lambda_1 \geq \lambda_2 - t \int FLF. \end{equation}
By definition of $F = \varphi + A \phi_1 + tG$ and integration by parts, 
$$\int FLF = \int \varphi L \varphi + A^2 \int \phi_1 L \phi_1 + t^2 \int GLG + 2 A \int \varphi L \phi_1 + 2 t \int G L \varphi + 2At \int G L \phi_1.$$
This gives 
$$\lambda_2(t) \geq \lambda_2 - t \int \varphi (L_1|_{t=0}) \varphi - t \int \varphi (L - L_1|_{t=0}) \varphi $$
$$- t A^2 \int \phi_1 L \phi_1 - t^3 \int GLG - 2 At \int \varphi L \phi_1 - 2 t^2 \int G L \varphi - 2At^2 \int G L \phi_1.$$
Incorporating estimate (\ref{eq:o1t}) for $\lambda_1 (t)$, we have
$$\lambda_2 (t) - \lambda_1 (t) \geq \lambda_2 - \lambda_1 + t \int \phi_1 (L_1|_{t=0}) \phi_1 - t \int \varphi (L_1|_{t=0}) \varphi - |O_1(t^2)|$$
$$ - t \int \varphi (L - L_1|_{t=0}) \varphi $$
$$- t A^2 \int \phi_1 L \phi_1 - t^3 \int GLG - 2 At \int \varphi L \phi_1 - 2 t^2 \int G L \varphi - 2At^2 \int G L \phi_1.$$
Our calculations from the proof of Theorem \ref{th:lmin} and the estimate (\ref{eq:o1t}) of $|O_1(t^2)|$ imply 
$$\xi(T(t)) \geq \frac{64 \pi^2}{9} + (2.64) t  - t^2 (175.95) - t \int \varphi (L - L_1|_{t=0}) \varphi $$
$$- t A^2 \int \phi_1 L \phi_1 - t^3 \int GLG - 2 At \int \varphi L \phi_1 - 2 t^2 \int G L \varphi - 2At^2 \int G L \phi_1.$$
%%%%%%%%%Estimate the first%%%%%%%%%%%%%
Recall the calculation
$$L - L_1|_{t=0} = \frac{4ta^2 \pa_x^2 - 4tb^2 \pa_y^2}{(\sqrt{3} + 2t)^2} + \left( \frac{-4\sqrt{3}}{(\sqrt{3} + 2tb)^2} + \frac{4}{\sqrt{3}} \right) \left( a \pa_x \pa_y + b \pa_y^2\right).$$
Thus, we have $\int \varphi (L - L_1|_{t=0}) \varphi =$
$$ -\frac{4ta^2}{(\sqrt{3} + 2t)^2} ||\varphi_x||^2 + \frac{4tb^2}{(\sqrt{3} + 2t)^2} ||\varphi_y||^2 + \left( \frac{4 \sqrt{3}}{(\sqrt{3} + 2tb)^2} - \frac{4}{\sqrt{3}} \right) \left( a \int \varphi_x \varphi_y + b ||\varphi_y||^2 \right).$$
We estimate using the Cauchy inequality, $||\varphi_x||$ and $||\varphi_y|| \leq ||\n \varphi||$ with $||\n \varphi||^2 = \lambda_2$, 
$$\int \varphi (L - L_1|_{t=0}) \varphi \leq \frac{4t}{(\sqrt{3} + 2t)^2} \lambda_2 +2 \left( \frac{4 \sqrt{3}}{(\sqrt{3} + 2tb)^2} - \frac{4}{\sqrt{3}} \right) \lambda_2.$$
Since we assume $t \leq 0.0004$, we have
$$\int \varphi (L - L_1|_{t=0}) \varphi \leq 0.58997.$$
We estimate the remaining terms using the Cauchy inequality, our estimates for $||L \varphi||$, $||L \phi_1||$, $G$, $||\n G||$, and the general estimate (\ref{eq:lcoeff}) for $L$, 
$$\left| \int \phi_1 L \phi_1 \right| \leq || L \phi_1 || \leq 86.194,$$
$$\left| \int GLG \right| \leq \frac{8 \sqrt{3} + 4t}{(\sqrt{3} - 2t)^2} || \n G ||^2 \leq 22664,$$
$$\left| \int \varphi L \phi_1 \right| \leq || L \phi_1 || \leq 86.194,$$
$$\left| \int G L \varphi \right| \leq ||G || ||L \varphi|| \leq 2009,$$
$$\left| \int G L \phi_1 \right| \leq ||G|| ||L \phi_1|| \leq 416,$$
since $||\phi_1|| = ||\varphi|| = 1$.  Substituting the estimate for $A$ gives our eventual estimate for the entire $O(t^2)$ error term, and we have
$$\xi(T(t)) \geq \frac{64 \pi^2}{9} + (2.64) t  - t^2 (175.95) - t (0.58997)$$
$$- t^3(1.243)^2 (86.194) - t^3(22664) - 2t^2(1.243)(86.194) - 2 t^2 (2009) $$
$$- 2t^3(1.243)(416).$$
This becomes
$$\xi(T(t)) \geq \frac{64 \pi^2}{9} + (2.05003) t $$
$$- t^2\left(175.95+ t*133.174 + t*22664 + 214.278+ 4018 + t*1034.18\right).$$
Thus, we compute the largest $t$ for which 
$$2.05003 > t \left(t*23831.4 + 4408.23 \right).$$
This is satisfied for any $t \leq 0.0004$.  
\qed
%%%%%%%%%%%%

\section{Proof of Theorem \ref{th:emin}}
By our preceding results and continuity of the eigenvalues, we may now complete the proof of Theorem ~\ref{th:emin} by computing the first two eigenvalues of a large but finite number of triangles.   
\subsection{Continuity estimate}
The following calculation is based on the linear deformation theory at the beginning of Section 4.  We use $T(x,y)$ to denote a triangle with vertices $(0,0)$, $(1,0)$, and $(x,y)$, and we use $\lambda_i (x,y)$ to denote its $i^{th}$ Dirichlet eigenvalue, and $\xi(x,y)$ to denote its fundamental gap, 
$$\xi(x,y) = \lambda_2 (x,y) - \lambda_1 (x,y).$$  
If a triangle $T(x^*,y^*)$ satisfies 
$$(x^*-x)^2 + (y^*-y)^2 \leq t^2,$$
then by (\ref{eq:gen-ev-var}), 
\begin{equation} \label{mesh} \xi(x^*, y^*) \geq  \xi(x,y)  - \frac{2.4t}{y^2} \left( \lambda_2 (x,y) + \lambda_1 (x,y) \right). \end{equation}
Therefore, for each triangle $T(x,y)$ at which we compute numerically
$$\xi(x,y) > \frac{64 \pi^2}{9},$$
we may use (\ref{mesh}) to determine a neighborhood of triangles satisfying
$$\xi > \frac{64\pi^2}{9},$$
\em without numerically computing the eigenvalues of the triangles in this neighborhood.  \em  
Consequently, we have reduced the problem to numerically computing the fundamental gap of finitely many triangles and using the following algorithm.  

\subsection{Algorithm}
The main idea of the algorithm is to use the preceding calculations to compute, to sufficient numerical accuracy, the first two eigenvalues of \em a finite grid of triangles \em and use this grid together with the continuity estimate to demonstrate that the gap of all triangles lying outside the cases covered by Propositions 1 and 4 is strictly larger than that of the equilateral triangle.  In particular, it follows from Propositions 1 and 4, the invariance of the gap function under scaling and symmetry that we need only compute for those triangles with vertices 
$$(0,0), \quad (1,0) \quad \textrm{and} \quad (x,y),$$
such that the following inequalities hold. 

\begin{enumerate}
\item[i.1] $x^2 + y^2 \leq 1$, by invariance of the gap function under scaling.  %, which implies that we only need to compute for triangles of diameter $1$.  
\item[i.2] $0.5 \leq x \leq 1$, by symmetry.  
\item[i.3] $0.005 \leq y \leq 1$, by Proposition 1. 
\item[i.4] $\sqrt{(x - 1/2)^2 + (y - \sqrt{3}/2)^2} > 0.0004$, by Proposition 4.  
\end{enumerate}

\subsubsection{The steps of the algorithm}  
We begin with the triangle whose vertices are $(0,0)$, $(1,0)$ and $(0.5, 0.005)$; this is step 0.  Next, in steps 1--2, we compute using (\ref{mesh}) the radius $t$ of the neighborhood around which the gap is strictly larger than $64\pi^2/9$.  We then increase the $x$-coordinate in step 3, and check that the inequalities i.1--i.4 hold.  If so, we repeat the calculations in steps 1--2 for the triangle whose third vertex is at the same height $y$ but has been translated in the positive $x$-direction (to the right).  We repeat steps 1--3 until the $x$ coordinate is large enough so that one of the inequalities i.1--i.4 fails; then we proceed to step 4.  In step 4, we return the $x$-coordinate to $0.5$ and increase the $y$-coordinate and check that the inequalities i.1--i.4 hold.  We then continue repeating steps 1--4.  
\begin{enumerate}
\item[0.] Initially, we define 
$$x_{0,j} := 0.5 \textrm{ for all $j$,} \quad \textrm{and} \quad y_0 := 0.005.$$  
\item[1.] At the $(i,j)^{th}$ iteration of the algorithm, where the first iteration of the algorithm is $(i,j) = (0,0)$, for the triangle with vertices 
$$(0,0), \quad (1,0) \quad \textrm{and} \quad (x_{i,j}, y_j)$$
we compute
\begin{enumerate}
\item[1.1] $\lambda_1$ and $\lambda_2$,
\item[1.2] $\xi = \lambda_2 - \lambda_1$ and  
\item[1.3] $A = \lambda_2 + \lambda_1$.
\end{enumerate}
\item[2.] We compute 
$$t_{i,j}' : = \left( \xi - \frac{64 \pi^2}{9} \right) \frac{y_j^2}{2.4 A}.$$
Let $n_{i,j}$ be the smallest $n \in \N$ such that the $10^{-n}$ digit in the decimal expansion of $t_{i,j}'$ is positive; let this digit be $d_{i,j}$.  Then, define 
$$t_{i,j} := 10^{-n_{i,j}} d_{i,j}.$$
The numerical method must be accurate up to the $10^{-n-1}$ decimal place; it then follows from the continuity estimate (\ref{mesh}) that 
for all triangles whose third vertex $(x,y)$ lies strictly within a neighborhood of radius $t_{i,j}$ about $(x_{i,j}, y_j)$, the gap function is strictly larger than $\frac{64 \pi^2}{9}$.  
\item[3.] We define 
$$x_{i+1,j} := x_{i,j} + t_{i,j},$$
and verify the following inequalities.  
\begin{enumerate}
\item[3.1] $(x_{i+1,j})^2 + (y_j)^2 \leq 1$.
\item[3.2] $\sqrt{(x_{i+1,j} - 0.5)^2 + \left( y_j - \frac{\sqrt{3}}{2} \right)^2} > 0.0004$.  
\end{enumerate}
If these inequalities are satisfied, repeat steps 1--3.  As soon as one of these inequalities is not satisfied,  proceed to step 4 below.  
\item[4.] Let $x_{0,j} = 0.5$, and for $j \geq 1$, define 
$$y_{j} = y_{j-1} + t_{0,j-1},$$
and verify the following inequalities. 
\begin{enumerate} 
\item[4.1] $(x_{0,j})^2 + (y_{j})^2 \leq 1$.
\item[4.2] $\sqrt{(x_{i,j} - 0.5)^2 + \left( y_j - \frac{\sqrt{3}}{2} \right)} > 0.0004$.
\end{enumerate}
If one of these inequalities is not satisfied, then the algorithm is complete.  If the inequalities are all satisfied, return to step 1 and repeat the algorithm.  
\end{enumerate}
\qed 

\subsection{The numerical methods}
The numerical computation of the eigenvalues were done by Timo Betcke using the Finite Element Method \em FreeFEM++ \em \cite{free}.  For efficiency, the calculations are made at each step but not stored, with the exception of $t_{0,j}$ which must be stored until it is replaced by $t_{0,j+1}$.  To demonstrate the behavior of the gap function numerically, Timo plotted the logarithm of the gap function in the figure below.  The grid points are parametrized so that each grid point corresponds to a triangle with vertices $(0,0)$, $(1,0)$ and $(x,y)$ where
$$x=1-\frac{\tau}{2} \quad \textrm{and} \quad y=\frac{\nu}{2}\sqrt{4-(2-\tau)^2}.$$ 
Hence, the equilateral triangle corresponds to $\nu=\tau=1$.   

\begin{figure}[ht] \label{gapplot} 
\includegraphics[width=11cm]{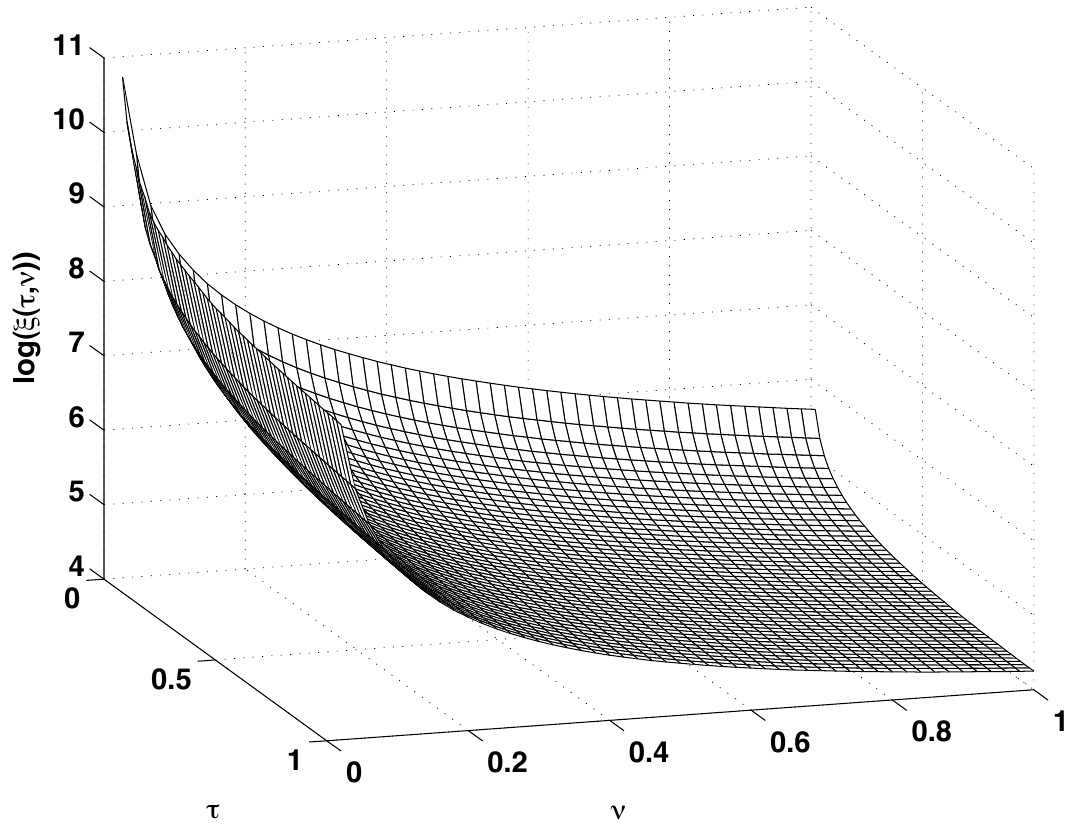} \caption{Plot of the logarithm of the gap function on the moduli space of triangles}
\end{figure}

\subsection{Concluding remarks}  
Based on the numerics, we make the following conjecture. 
\begin{conjecture} The logarithm of the gap function on the moduli space of triangles is a strictly convex function. 
\end{conjecture} 

Recently, Laugesen and Siudeja \cite{lau-s} proved an interesting related result.
\begin{theorem}[Laugesen-Siudeja] 
For any triangle of diameter $1$ with eigenvalues $\{ \Lambda_k\}_{k=1} ^{\infty}$, 
\begin{equation} \label{eq:ls} \sum_{k=1} ^n \Lambda_k \geq \sum_{k=1} ^n \lambda_k, \quad \forall \quad n \in \N, \end{equation} 
where $\{ \lambda_k \}_{k=1} ^{\infty}$ are the eigenvalues of the equilateral triangle. 
\end{theorem}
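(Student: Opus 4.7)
The plan is to use the Ky Fan partial-trace variational principle
\begin{equation*}
\sum_{k=1}^n \mu_k(\Omega) = \min\Bigl\{\sum_{k=1}^n \int_\Omega |\nabla v_k|^2 : \{v_k\}_{k=1}^n \text{ is $L^2(\Omega)$-orthonormal in } H^1_0(\Omega)\Bigr\},
\end{equation*}
combined with transplantation across an affine map between $T$ and the equilateral $E$.  Since $T$ has diameter $1$, its longest side has length exactly $1$; after rotation and translation I place it along $[0,1]\times\{0\}$ with third vertex $(x_0,h)$.  The diameter constraint confines $(x_0,h)$ to the lens $\{x^2+y^2\leq 1\}\cap\{(x-1)^2+y^2\leq 1\}$, which forces $h\leq \sqrt{3}/2$ with equality iff $T=E$.

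Let $\Phi_1,\ldots,\Phi_n$ be $L^2(T)$-orthonormal Dirichlet eigenfunctions of $T$, and let $A:E\to T$ be the unique affine map fixing the long side and sending $(1/2,\sqrt{3}/2)\mapsto(x_0,h)$; its linear part is
\begin{equation*}
M = \begin{pmatrix} 1 & \tfrac{2x_0-1}{\sqrt{3}} \\ 0 & \tfrac{2h}{\sqrt{3}} \end{pmatrix},
\end{equation*}
so $\det M\leq 1$.  The rescaled pullbacks $\tilde\phi_k := |\det A|^{1/2}\,\Phi_k\circ A$ form an $L^2(E)$-orthonormal family in $H^1_0(E)$, and the change-of-variables computation from Section~4 reduces Ky Fan on $E$ to
\begin{equation*}
\sum_{k=1}^n \lambda_k \;\leq\; \sum_{k=1}^n \int_E |\nabla \tilde\phi_k|^2 \;=\; \sum_{k=1}^n \int_T \langle MM^T\,\nabla\Phi_k,\nabla\Phi_k\rangle.
\end{equation*}
The inequality $\sum_k \Lambda_k\geq\sum_k\lambda_k$ would follow at once from the pointwise quadratic-form bound $MM^T\preceq I$, since the right-hand side is then dominated by $\sum_k\int_T|\nabla\Phi_k|^2=\sum_k\Lambda_k$.

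The main obstacle is that $MM^T\preceq I$ is generally false: although $\det(MM^T)=(2h/\sqrt{3})^2\leq 1$, the shear entry $(2x_0-1)/\sqrt{3}$ drives one singular value of $M$ above $1$ for sufficiently skewed triangles in the lens.  To circumvent this I would exploit the $D_3$-symmetry of the equilateral and average over the three possible side-matchings: for each side $s_j$ of $T$ of length $\ell_j$, rescale $T$ by $\ell_j^{-1}$ so that $s_j$ becomes the unit-length longest side and build the analogous affine map $M_j$.  Summing the three Ky~Fan inequalities with the natural weights dictated by the scaling law $\lambda_k(\ell_j^{-1}T) = \ell_j^2\lambda_k(T)$, the $D_3$-symmetry of $E$ should force the averaged form $\tfrac13\sum_j M_jM_j^T$ to be a scalar multiple of the identity; the scalar is then bounded by $1$ thanks to the fact that the equilateral maximizes area among triangles of diameter~$1$.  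The hardest part of this program is executing the three-way averaging rigorously -- matching the $L^2$-normalizations across the rescalings and verifying the sharp bound $\tfrac13\sum_j M_jM_j^T\preceq I$ with equality only at $E$.  A possible alternative would be a polarization/continuous Steiner symmetrization argument directly on the moduli space $\mathfrak{M}$: if every polarization step toward $E$ is shown to not increase $\sum_{k=1}^n \Lambda_k$, then $E$, as the unique fixed point in $\mathfrak{M}$, is the global minimizer.
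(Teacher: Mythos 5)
The paper does not prove this theorem: it is attributed to Laugesen--Siudeja and cited from \cite{lau-s}. The only argument the paper offers is the two-line observation that the $n=2$ case follows from Theorem~\ref{th:emin} and the well-known $n=1$ case, by adding $\Lambda_2-\Lambda_1\geq\lambda_2-\lambda_1$ and $\Lambda_1\geq\lambda_1$ to obtain $\Lambda_1+\Lambda_2\geq\lambda_1+\lambda_2$. So there is no internal proof of the general statement against which to check your proposal; you are in effect attempting to reprove the external Laugesen--Siudeja result.

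Your outline does point at the correct mechanism (the actual Laugesen--Siudeja argument is a Ky~Fan variational principle combined with affine transplantation from the equilateral and a tight-frame average over the $D_3$ action), but it stops exactly at the step where the work lies, and the step as stated has real gaps. Once you rescale $T$ by $\ell_j^{-1}$ to align side $j$ with the unit side of $E$, the scaling $\lambda_k(\ell_j^{-1}T)=\ell_j^2\lambda_k(T)$ introduces factors of $\ell_j^2$ into the three Ky~Fan inequalities, so the object you must dominate by $I$ is a weighted combination $\sum_j w_j\ell_j^2 M_j M_j^T$ rather than the bare average $\tfrac13\sum_j M_j M_j^T$; those weights and $\ell_j$-factors do not cancel automatically and you have not specified $w_j$. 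Nor is it automatic that this weighted sum is a scalar matrix: $D_3$ is a symmetry of $E$, not of $T$, and the side-length asymmetry of $T$ enters each $M_j$. Finally, ``the equilateral maximizes area among unit-diameter triangles'' controls $\det M_j$, whereas the bound you need is on a trace-type functional of $M_j M_j^T$, so that geometric fact alone cannot close the estimate. The polarization alternative is mentioned but not developed, and it is not clear that polarization preserves the moduli space of triangles or converges to the equilateral. In short, the framework is right, but the central inequality is asserted rather than proved, so this is a plan rather than a proof.
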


For $n=1$, (\ref{eq:ls}) is well known.  The case $n=2$ can be deduced from Theorem \ref{th:emin} as follows.  By Theorem \ref{th:emin} and (\ref{eq:ls}) with $n=1$, 
$$\Lambda_2 - \Lambda_1 \geq \lambda_2 - \lambda_1 \implies \Lambda_2 + \Lambda_1 \geq \lambda_2 - \lambda_1 + 2 \lambda_1 = \lambda_2 + \lambda_1.$$

The existence and identity of a gap-minimizing simplex is a challenging open problem.  Based on our results, we expect the following.  
\begin{conjecture}  Let $\M_n$ be the moduli space of all $n$-simplices with unit diameter.  For $n \geq 2$, the regular simplex defined by points $p_0, p_1, \ldots, p_n \in \R^n$ such that 
$$|p_i - p_j| = 1 \textrm{ for } 0 \leq i \neq j \leq n$$ 
uniquely minimizes the gap function on $\M_n$.  
\end{conjecture}

There are several difficulties to be addressed. A subtle problem is the behavior of the gap of a family of collapsing simplices when several directions collapse simultaneously.  Is it possible that competing collapsing directions may result in a gap which stays bounded or converges to that of the interval as simplices collapse?  Numerical calculations would provide insight into what one might expect;  combining classical techniques with modern computation may produce interesting new results.  
\\

We end this paper with a brief discussion of the similarities and differences between the behavior of the gap function on convex domains and the gap function restricted to the moduli space of $n$-simplices.  In the fundamental work of \cite{swyy} and subsequent papers \cite{yau3}, \cite{yau4} culminating in the proof of the fundamental gap conjecture \cite{ac}, the general method is to compare the eigenvalue estimate in higher dimensions to the eigenvalue estimate on a one dimensional manifold.  The minimum gap for all convex domains can be asymptotically approached by thin tubular domains, and the minimum is achieved in dimension one.  We pose the natural question:
\begin{quote}  
Is this minimum unique?  
\end{quote} 
More precisely, we make the following conjecture.
\begin{conjecture}
Let $\Omega \subset \R^n$ be a convex domain, and assume $n > 1$.  Then 
$$\xi(\Omega) > 3 \pi^2.$$
\end{conjecture} 
In the case of triangular domains, the gap function is \em uniquely minimized \em by the equilateral triangle.  It would be interesting to extend the beautiful works in the spirit of \cite{swyy} and \cite{ac} to compare the eigenvalue estimate in higher dimensions to the eigenvalue estimate in dimensions greater than one.  In particular, it would be interesting to compare the eigenvalue estimate to that on the equilateral triangle or other computable planar domains.

\end{document}